\newtheorem{theorem}{Theorem}[section]
\newtheorem{lemma}[theorem]{Lemma}
\newtheorem{definition}[theorem]{Definition}
\newtheorem{example}[theorem]{Example}
\newcommand\R{{\mathbb R}}
\newcommand{\CPb}{\overline{\mathbb{CP}}^{2}}
\newcommand{\CP}{{\mathbb{CP}}^{2}}
\def \CPb {\overline{\mathbb{CP}}^{2}}
\def \CP {{\mathbb{CP}}^{2}} 
\def \R {\mathbf{R}}
\def \Sig{\Sigma}
\def \- {\setminus}
\def\pt{\text{pt}}
\title{New Exotic $4$-Manifolds via Luttinger Surgery on Lefschetz Fibrations} 
\begin{document}

\author{Anar Akhmedov}
\address{School of Mathematics,
University of Minnesota,
Minneapolis, MN, 55455, USA}
\email{akhmedov@math.umn.edu}

\author{Kadriye Nur Saglam}
\address{School of Mathematics,
University of Minnesota,
Minneapolis, MN, 55455, USA}
\email{sagla004@math.umn.edu}

\begin{abstract} In \cite{A2}, the first author constructed the first known examples of exotic minimal symplectic $\CP\#5\CPb$ and minimal symplectic $4$-manifold that is homeomorphic but not diffeomorphic to $3\CP\#7\CPb$. The construction in \cite{A2} uses Y. Matsumoto's genus two Lefschetz fibrations on $M = \mathbb{T}^{2}\times \mathbb{S}^{2}\,\#4\CPb$ over $\mathbb{S}^2$ along with the fake symplectic $\mathbb{S}^{2} \times \mathbb{S}^{2}$ construction given in \cite{A1}. The main goal in this paper is to generalize the construction in \cite{A2} using the higher genus versions of Matsumoto's fibration constructed by Mustafa Korkmaz and Yusuf Gurtas on $M(k,n) = \Sigma_{k}\times \mathbb{S}^{2}\,\#4n\CPb$ for any $k \geq 2$ and $n = 1$, and $k \geq 1$ and $n \geq 2$, respectively. Using our building blocks, we also construct symplectic $4$-manifolds with the free group of rank $s \geq 1$ and various other finitely generated groups as the fundamental group. 

\end{abstract}

\maketitle

\section{Introduction}

The main goal of this paper is to exhibit a new family of simply connected minimal symplectic and infinitely many non-symplectic $4$-manifolds that is homeomorphic but not diffemorphic to $(2n+2k-3)\CP\#(6n+2k-3)\CPb$ for any $n \geq 1$ and $k \geq 1$. Our construction is a generalization of the first author's work in \cite{A2}, where he used Yukio Matsumoto's genus two Lefschetz fibrations on $M = \mathbb{T}^{2}\times \mathbb{S}^{2}\,\#4\CPb$ over $\mathbb{S}^2$, arising from the elliptic involution of the genus two surface with two fixed points, along with the fake symplectic $\mathbb{S}^{2} \times \mathbb{S}^{2}$ construction in \cite{A1} obtained via knot surgery along the fibered knots. In the aforementioned paper \cite{A2}, using the symplectic connected sum along the genus two surfaces, the first author has constructed the first known exotic minimal symplectic $\CP\#5\CPb$ and minimal symplectic $4$-manifold that is homeomorphic but not diffeomorphic to $3\CP\#7\CPb$. A straightforward generalization of these symplectic examples to an exotic $(2n-3)\CP\#(6n-3)\CPb$ (for $n \geq 4$), using Matsumoto's genus two fibration, can be found in \cite{ABBKP}, where the geography problem for simply connected minimal symplectic $4$-manifolds with signature less than equal $-2$ studied in details. Considering that Mustafa Kormaz's and Yusuf Gurtas' higher genus Lefschetz fibrations on $M(k,n) = \Sigma_{k}\times \mathbb{S}^{2}\,\#4n\CPb$, arising from the involution of the genus $2k + n - 1$ surface, are the generalizations of Matsumoto's genus two fibration, it was a natural question whether the similar construction can be carried out using these higher genus Lefschetz fibrations. The fundamental group of these Lefschetz fibrations are not abelian if $k \geq 2$ and the relations in the fundamental group coming from the vanishing cycles are far more complicated, thus the fundamental group computations become more difficult and delicate than in \cite{A2}. In this article, by carefully analysing the fundamental group, we overcome these difficulties and prove the following theorem:

\begin{theorem}\label{thm:main}
Let $M$\/ be \/ $(2n+2k-3)\CP\#(6n+2k-3)\CPb$ for any $n \geq 1$ and $k \geq 1$. There exists a new family of smooth closed simply-connected minimal symplectic\/ $4$-manifold and an infinite family of non-symplectic $4$-manifolds that are homeomorphic but not diffeomorphic to\/ $M$.    
\end{theorem}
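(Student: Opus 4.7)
The plan is to follow the same symplectic engineering strategy pioneered in \cite{A2}, but carried out with the Korkmaz--Gurtas genus $2k+n-1$ Lefschetz fibrations on $M(k,n)=\Sig_k\x S^2\#4n\CPb$ in place of Matsumoto's genus two fibration. The input data are: (i) the closed symplectic $4$-manifold $M(k,n)$ with its genus $g=2k+n-1$ Lefschetz fibration over $S^2$, containing a symplectic fiber $\Sig_g$ of self-intersection zero, and (ii) a fake symplectic $\SS$, i.e.\ a minimal symplectic $4$-manifold $Y$ homeomorphic but not diffeomorphic to $\SS$ and containing a symplectic genus $g$ surface $\Sig'_g$ of square zero, obtained as in \cite{A1} by knot surgery along a fibered knot of genus $g$ in a suitable torus bundle. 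The new symplectic manifold $X(k,n)$ is then the Gompf symplectic normal connected sum
\[
X(k,n) \;=\; M(k,n)\,\#_{\Sig_g=\Sig'_g}\,Y,
\]
possibly preceded by a sequence of Luttinger surgeries on Lagrangian tori in $M(k,n)$ chosen to kill the extra generators of $\pi_1(M(k,n))=\pi_1(\Sig_k)$ that survive the connected sum.

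First I would compute the characteristic numbers of $X(k,n)$ using the standard formulas $\chi(X_1\#_\Sig X_2)=\chi(X_1)+\chi(X_2)+4g-4$ and $\s(X_1\#_\Sig X_2)=\s(X_1)+\s(X_2)$, together with the invariance of $\chi$ and $\s$ under Luttinger surgery. A direct calculation from $\chi(M(k,n))=4-4k+4n$, $\s(M(k,n))=-4n$, $\chi(Y)=4$, $\s(Y)=0$ should yield exactly $\chi=4+4(2n+2k-3)$ and $\s=-2(4n)-\text{(correction)}$ matching $(2n+2k-3)\CP\#(6n+2k-3)\CPb$. Once simple connectivity is established, Freedman's theorem together with the parity of the intersection form (odd, since the summands are odd) identifies $X(k,n)$ with $M$ topologically.

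The main obstacle, and the step I would devote most of the work to, is the fundamental group computation. By van Kampen applied to the normal connected sum,
\[
\pi_1(X(k,n)) \;=\; \bigl(\pi_1(M(k,n)\-\nu\Sig_g)\,*\,\pi_1(Y\-\nu\Sig'_g)\bigr)\big/\!\la \mu_1=\mu_2,\,\iota_1(\g)=\iota_2(\g)\ra,
\]
where the relations are given by the meridians and by identifying loops on $\Sig_g$. Since $\pi_1(Y\-\nu\Sig'_g)$ is very small (trivial or cyclic after the knot surgery setup), the task reduces to showing that the Lefschetz vanishing-cycle relations from the Korkmaz--Gurtas monodromy, combined with the Luttinger surgery relations on carefully chosen Lagrangian tori $T_i\C M(k,n)$, force all generators of $\pi_1(\Sig_k)$ and of $\pi_1(\Sig_g)$ (viewed inside $M(k,n)\-\nu\Sig_g$) to become trivial. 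This is precisely where the higher genus case ($k\ge 2$) is dramatically harder than in \cite{A2}: the surface group $\pi_1(\Sig_k)$ is nonabelian and the vanishing-cycle words coming from Korkmaz's and Gurtas' positive factorizations are long and intertwined, so the surgery tori must be selected so that the resulting commutator relations, together with the vanishing-cycle relations, kill every generator. I would organize this by first listing the Korkmaz/Gurtas vanishing cycles explicitly, then producing $2k$ disjoint Lagrangian tori with geometrically dual loops representing each pair of standard generators $a_i,b_i$ of $\pi_1(\Sig_k)$, and finally verifying trivialization generator by generator.

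With $\pi_1(X(k,n))=1$ in hand, minimality of $X(k,n)$ follows from Usher's theorem \cite{Usher} on minimality of symplectic sums (both $M(k,n)$ with the Luttinger surgeries and the fake $\SS$ are minimal, and neither summand is a rational or ruled complement along the gluing surface). To produce an \emph{infinite} family of non-symplectic exotic smooth structures homeomorphic to $M$, I would replace the knot surgery on $Y$ by knot surgeries along an infinite family of fibered knots $K_m$ of genus $g$ with pairwise distinct Alexander polynomials; Fintushel--Stern's formula then gives distinct Seiberg--Witten invariants (the symplectic representative has SW$\ne 0$ by Taubes, while the standard $(2n+2k-3)\CP\#(6n+2k-3)\CPb$ has vanishing SW), so the $X_m(k,n)$ are pairwise non-diffeomorphic and non-diffeomorphic to $M$. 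Only the symplectic one among them is symplectic; the rest are smoothed via a non-symplectic variant of the knot surgery construction, completing the proof.
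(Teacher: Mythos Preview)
Your second building block is wrong, and this breaks the homeomorphism type. A fake symplectic $S^2\times S^2$ has $e=4$, so your fiber sum would have
\[
e \;=\; e(M(k,n)) + 4 + 4(g-1) \;=\; (4-4k+4n)+4+4(2k+n-2) \;=\; 8n+4k,
\]
whereas $(2n+2k-3)\CP\#(6n+2k-3)\CPb$ has $e=8n+4k-4$; your own stated target value $4+4(2n+2k-3)=8n+8k-8$ matches neither. The introduction's reference to ``the fake symplectic $S^2\times S^2$ construction in \cite{A1}'' is to the \emph{building blocks} used there, not to the fake $S^2\times S^2$ itself. What the paper actually uses is the manifold $Y_g(1,1)$ obtained from $\Sigma_g\times\mathbb{T}^2$ (with $g=2k+n-1$) by $2g$ Luttinger surgeries; this has $e=\sigma=0$ and is the Luttinger analogue of $M_K\times S^1$ for a fibered genus-$g$ knot $K$, not a fake $S^2\times S^2$.

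Equally important, the Luttinger surgeries are performed on the $\Sigma_g\times\mathbb{T}^2$ summand, not on $M(k,n)$. The simple-connectivity proof hinges on the interplay between the commutator relations these surgeries introduce (such as $[a_i^{-1},d]=b_i$ and $[a_{2k},d]=1$, with $c,d$ the $\mathbb{T}^2$-generators) and the vanishing-cycle relations $a_ia_{2k+1-i}=1$ coming from the Korkmaz--Gurtas monodromy: one gets $b_1=[a_1^{-1},d]=[a_{2k},d]=1$, after which the remaining generators collapse. Your scheme of placing Lagrangian tori inside $M(k,n)$ does not produce these relations; and in any case, if you were gluing to a genuinely simply connected complement $Y\setminus\nu\Sigma'_g$, then $\pi_1$ would die automatically (since $\pi_1(\Sigma_g)\to\pi_1(M(k,n))$ is surjective), so the ``main obstacle'' you describe is not where the difficulty actually lies. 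Finally, the infinite non-symplectic family in the paper is produced by varying a single surgery coefficient $m$ in $Y_g(1,m)$ rather than by changing Alexander polynomials, though your knot-surgery variant would also work once the correct building block is in place.
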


One difference, though not essential, with the construction in \cite{A2} (see also \cite{A1}) is that we use the Luttinger surgery instead of the knot surgery. The Luttinger surgery will have some advantages for us in the computation of the fundamental groups, since the presentation of the fundamental groups look much simpler via Luttinger surgery. This is due to the fact that the relations introduced by our Luttinger surgeries all involves a single commutator relation expression, while the fundamental groups via knot surgery is more challenging to work with. Our construction can be also interpreted using the knot surgery. 

The second goal of our paper is to construct the symplectic $4$-manifolds with various fundamental groups (and with a small size) by applying the Luttinger surgeries to a certain family of Lefschetz fibrations. More general and stronger results along these lines are obtained in \cite{AO} and \cite{AZ}. 
 
\section{Mapping class group and Lefschetz fibrations}

\subsection{Mapping Class Groups} 

\par Let $\Sigma_{g}$ be an orientable $2$-dimensional closed and connected surface of genus $g>0$.

\begin{definition}

Let $Diff^{+}\left( \Sigma_{g}\right)$ denote the group of all orientation-preserving diffeomorphisms $\Sigma_{g}\rightarrow \Sigma_{g},$ and $ Diff_{0}^{+}\left(
\Sigma_{g}\right) $ be the subgroup of $Diff^{+}\left(\Sigma_{g}\right) $ consisting of all orientation-preserving diffeomorphisms $\Sigma_{g}\rightarrow \Sigma_{g}$ that are isotopic to the identity. \emph{The mapping class group} $M_{g}$ of $\Sigma_{g}$ is defined to be the group of isotopy classes of orientation-preserving diffeomorphisms of $\Sigma_{g}$, i.e.,
\[
M_{g}=Diff^{+}\left( \Sigma_{g}\right) /Diff_{0}^{+}\left(
\Sigma_{g}\right) .
\]
\end{definition}

\begin{definition}
\par Let $\alpha$ be a simple closed curve on $\Sigma_{g}$. A \emph{right handed Dehn twist} $t_\alpha$ about $\alpha$ is the isotopy class of a self-diffeomophism of $\Sigma_{g}$ obtained by cutting the surface $\Sigma_{g}$ along $\alpha$ and gluing the ends back after rotating one of the ends $2\pi$ to the right. 
\end{definition}

\begin{definition} \par Let $S(\Sigma_{g})$ denote be the set of all isotopy classes of simple closed curves in $\Sigma_{g}$. The \emph{intersection number} of two classes $[\alpha]$ and $[\beta]$ is $\mu([\alpha], [\beta]) = min \{ |a \cap b| | a \in [\alpha], b \in [\beta] \}$. 
\end{definition}

Let us mention a few basic facts about Dehn twists. First, notice that the conjugate of a Dehn twist is again a Dehn twist. Indeed, if $f: \Sigma_{g}\rightarrow \Sigma_{g}$ is an orientation-preserving diffeomorphism, then we can easily see that $f \circ t_\alpha \circ f^{-1} = t_{f(\alpha)}$. Second, if $\alpha$ and $\beta$ are two simple closed curves with $\mu([\alpha], [\beta]) = k \geq 0$, then $t_{\alpha }(\beta) = {\alpha}^{k}\beta$. In Section~\ref{sbb}, we will use some relations that hold in the mapping class group $M_{g}$. In order to provide a right framework for our discussion in Section~\ref{sbb}, we briefly review the theory of Lefschetz fibrations on $4$-manifolds below.

\subsection{Lefschetz Fibration}

\begin{definition}\label{defn:lef.fibration} Let $X$ be a compact, connected, oriented, smooth $4$-manifold. A \emph{Lefschetz fibration} on $X$ is a smooth map $f : X \longrightarrow \Sigma_{h}$,
where $\Sigma_{h}$ is a compact, oriented, smooth $2$-manifold of genus $h$, such that $f$ is surjective and each critical point of $f$ has an orientation preserving chart on which $f: \mathbb{C}^2 \longrightarrow \mathbb{C}$ is given by $f(z_1, z_2) = {z_1}^2 + {z_2}^2$.
\end{definition}

It is a consequence of the Sard's theorem that $f$ is a smooth fiber bundle away from finitely many critical points. Let us denote the critical points of $f$ by $p_1, \cdots, p_s$. The genus of the regular fiber of $f$ is defined to be the genus of the Lefschetz fibration. If a fiber of $f$ passes through critical point set $p_{1}, \cdots, p_{s}$, then it is called a singular fiber which is an immersed surface with a single transverse self-intersection. A singular fiber of the genus $g$ Lefschetz fibration can be described by its monodromy, i.e., an element of the mapping class group $M_{g}$. This element is a right-handed (or a positive) Dehn twist along a simple closed curve on $\Sigma_g$, called the \emph {vanishing cycle}. If this curve is a nonseparating curve, then the singular fiber is called \emph{nonseparating}, otherwise it is called \emph{separating}. For a genus $g$ Lefschetz fibration over $S^2$, the product of right handed Dehn twists $t_{\alpha_{i}}$ along the vanishing cycles $\alpha_i$, for $i = 1, \cdots , s$, determines the global monodromy of the Lefschetz fibration, the relation $t_{\alpha_1} \cdot t_{\alpha_2} \cdot ~ \cdots ~ \cdot t_{\alpha_s} = 1$ in $M_{g}$. Conversely, such a relation in $M_{g}$ determines a genus $g$ Lefschetz fibration over $S^2$ with the vanishing cycles $\alpha_1, \cdots, \alpha_s$.

%The following lemma and example will be needed in the sequel. 

\begin{lemma}
\label{lemma:pi1} $($\cite{GS}$)$
Let $f:X\to \mathbb{S}^2$ be a genus $g$ Lefschetz fibration with global monodromy given by the relation~$t_{\alpha_1} \cdot t_{\alpha_2} \cdot ~ \cdots ~ \cdot t_{\alpha_s} = 1$. Suppose that $f$ has a section. Then the fundamental group of $X$ is isomorphic to the fundamental group of $\Sigma_g$ divided out by the normal closure of the simple closed curves $\alpha_1,\alpha_2,\ldots,\alpha_s$, considered as elements in $\pi_1(\Sigma_g)$. In particular, there is an epimorphism $\pi_1(\Sigma_g)\to \pi_1(X)$
 
\end{lemma}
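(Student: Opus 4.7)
Let $N$ denote the normal closure in $\pi_1(\Sigma_g)$ of the classes $[\a_1],\ldots,[\a_s]$, so that the goal is $\pi_1(X)\cong\pi_1(\Sigma_g)/N$. The plan is a van Kampen argument on the standard handle decomposition induced by the fibration. First, I would split the base as $\mathbb{S}^2=D\cup D'$, where $D$ is a closed disk containing all critical values of $f$ and $D'=\overline{\mathbb{S}^2\setminus D}$ contains a chosen regular value $p_0$. Setting $V:=f^{-1}(D')$ and $W:=f^{-1}(D)$, we have $X=V\cup W$ with $\partial V=\partial W\cong\Sigma_g\times S^1$. Since $f|_V$ has no critical values, it is a smooth $\Sigma_g$-bundle over a disk and therefore trivial, giving $V\cong\Sigma_g\times D^2$ and $\pi_1(V)\cong\pi_1(\Sigma_g)$.

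Next I would compute $\pi_1(W)$ from the Lefschetz structure. The piece $W$ is obtained from a neighborhood of a regular fiber in $W$, itself another copy of $\Sigma_g\times D^2$, by attaching one two-handle per critical point, each along a curve in the fiber isotopic to its vanishing cycle $\a_i$ (the framing is $-1$ relative to the fiber framing, but framings are irrelevant for $\pi_1$). Inductive van Kampen applied to each two-handle attachment shows that $\pi_1(W)\cong\pi_1(\Sigma_g)/N$.

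Finally I would amalgamate $V$ and $W$ along their common boundary $\Sigma_g\times S^1$, which has fundamental group $\pi_1(\Sigma_g)\times\mathbb{Z}$. Under inclusion into $V$, the $\pi_1(\Sigma_g)$ factor maps isomorphically onto $\pi_1(V)$ while the $\mathbb{Z}$ generator (the loop around the $S^1$ factor) bounds the disk $\{\pt\}\times D^2\subset V$ and is therefore trivial; under inclusion into $W$, the $\pi_1(\Sigma_g)$ factor maps onto the corresponding generators of $\pi_1(W)$. The main point, and the only place the section hypothesis is invoked, is the image of the $\mathbb{Z}$ generator in $\pi_1(W)$: the restriction of the section $\sigma\colon\mathbb{S}^2\to X$ to $D$ provides a disk in $W$ whose boundary is freely homotopic in $\Sigma_g\times S^1$ to the base loop $\{\pt\}\times S^1$, so that generator is killed. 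Consequently the $\mathbb{Z}$ factor dies on both sides, the amalgamation collapses to $\pi_1(\Sigma_g)/N$, and the natural epimorphism $\pi_1(\Sigma_g)\to\pi_1(X)$ is induced by the inclusion of a regular fiber. Without the section, this $\mathbb{Z}$ factor can survive nontrivially and obstruct the conclusion, so this last identification is the main technical content of the proof.
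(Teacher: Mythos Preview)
The paper does not give its own proof of this lemma; it merely cites \cite{GS}. Your van Kampen argument on the handle decomposition is correct and is essentially the standard proof found in that reference. One small point worth making explicit: the claim that $\sigma(\partial D)$ is freely homotopic in $\Sigma_g\times S^1$ to $\{\pt\}\times S^1$ does not follow from the disk $\sigma(D)\subset W$ alone, since two sections of $\Sigma_g\times S^1\to S^1$ can differ by an element of $\pi_1(\Sigma_g)$; it is cleanest to choose the trivialization $V\cong\Sigma_g\times D'$ so that $\sigma|_{D'}=\{\pt\}\times D'$, after which the two loops literally coincide and the rest of your argument goes through verbatim.
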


The following example is helpful for illustrating the discussion in the proof of our main Theorem.

\begin{example}

Let $\alpha_1$, $\alpha_2$, .... , $\alpha_{2g}$, $\alpha_{2g+1}$ denote the collection of simple closed curves given in Figure~\ref{fig:hyper}, and $c_{i}$ denote the right handed Dehn twists $t_{\alpha_i}$ along the curve $\alpha_i$. It is well-known that the following relations hold in the mapping class group $M_g$:  

\begin{equation}
\begin{array}{l}
\Gamma_{1}(g) = (c_1c_2 \cdots c_{2g-1}c_{2g}{c_{2g+1}}^2c_{2g}c_{2g-1} \cdots c_2c_1)^2 = 1.  \\
\end{array}
\end{equation}

\begin{figure}[ht]
\begin{center}
\includegraphics[scale=.43]{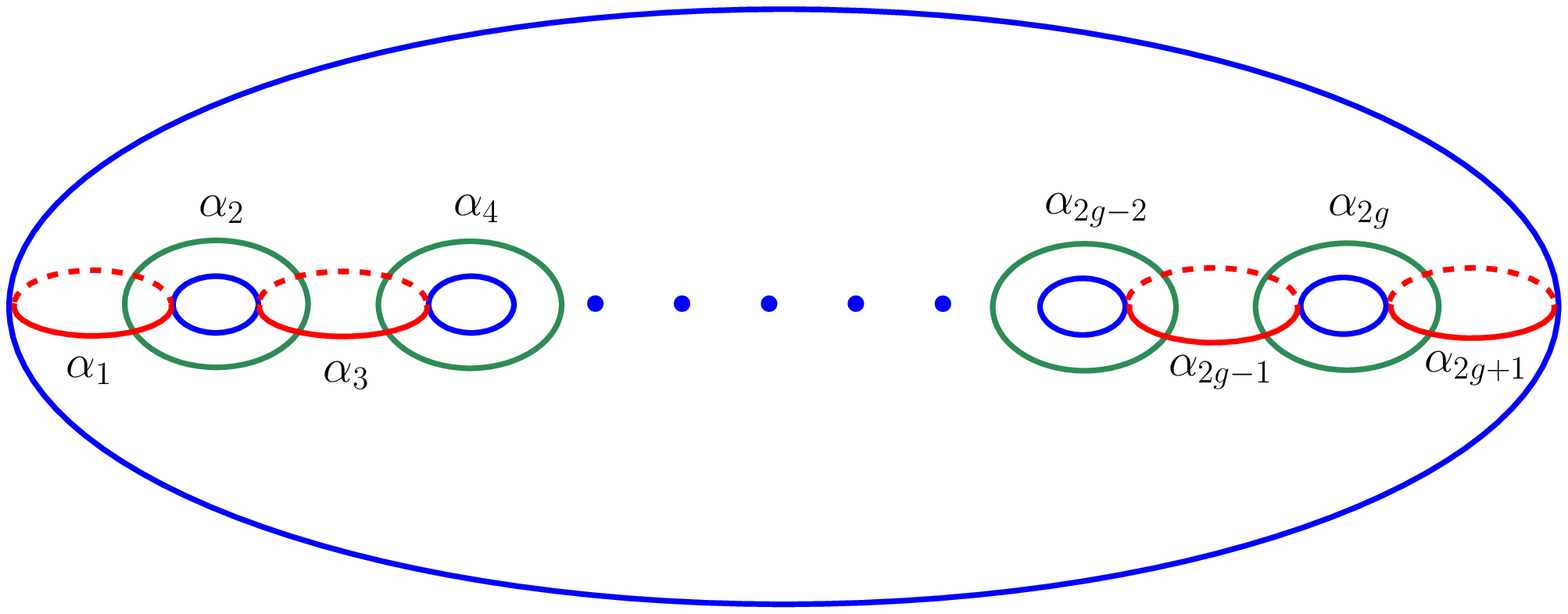}
\caption{Vanishing Cycles of the Genus $g$ Lefschetz Fibration on $X(g,1) = \CP\#(4g+5)\CPb$,}
\label{fig:hyper}
\end{center}
\end{figure}

\noindent The monodromy relation given above, corresponding to the genus $g$ Lefschetz fibration over $S^2$, has total space $X(g,1) = \CP\#(4g+5)\CPb$, the complex projective plane blown up at $4g+5$ points. Furthermore, it is well known that for $g \geq 2$, the above fibration on $X(g,1)$ admits $4g + 4$ disjoint $(-1)$-sphere sections (see \cite{T} for a proof of this fact using a mapping class group argument or \cite{ako} for a geometric argument).

\end{example}

\section{Symplectic connected sum and Luttinger surgery}

\subsection{Symplectic connected sum}

\begin{definition} Let $X_{1}$\/ and $X_{2}$\/ are closed, oriented, smooth $4$-manifolds, and $F_{i} \subset X_{i}$ are $2$-dimensional, smooth, closed, connected submanifolds in them. Suppose that  $[F_{1}]^{2} + [F_{2}]^{2} = 0$ and the genera of $F_{1}$ and $F_{2}$ are equal. We choose an orientation-preserving diffemorphism $\psi : F_{1} \longrightarrow  F_{2}$ and lift it to an orientation-reversing diffemorphism  $\Psi : \partial \nu F_{1} \longrightarrow  \partial \nu F_{2}$ between the boundaries of the tubular neighborhoods of $\nu F_{i}$. Using $\Psi$, we glue $X_{1} \setminus\nu F_{1}$ and $X_{2}\setminus \nu F_{2}$\/ along the boundary. This new oriented smooth $4$-manifold $X_{1}\#_{\Psi}X_{2}$ is called the \emph{connected sum}\/ of $X_{1}$\/ and $X_{2}$\/ along $F_{1}$ and $F_{2}$, determined by $\Psi$.

\end{definition}

\medskip

\begin{lemma} 
Let $X_{1}$ and $X_{2}$ be smooth\/ $4$-manifolds as above. Then 
\begin{eqnarray*}
c_{1}^{2}(X_{1}\#_{\Psi}X_{2}) &=&  c_{1}^{2}(X_{1}) + c_{1}^{2}(X_{2}) + 8(g-1),\\
\chi_{h}(X_{1}\#_{\Psi}X_{2}) &=&  \chi_{h}(X_{1}) + \chi_{h}(X_{2}) + (g-1),
\end{eqnarray*}
where $g$ is the genus of the surface $\Sigma$. 
\end{lemma}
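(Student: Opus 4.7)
The plan is to reduce the statement to the additivity of the topological Euler characteristic $\chi$ and the signature $\sigma$ under gluing along 3-manifolds, and then convert to $c_1^2$ and $\chi_h$ via the standard identities $c_1^2 = 2\chi + 3\sigma$ and $\chi_h = (\chi+\sigma)/4$ for a closed oriented $4$-manifold. So it suffices to verify
\[
\chi(X_1\#_\Psi X_2) = \chi(X_1)+\chi(X_2)+4(g-1), \qquad \sigma(X_1\#_\Psi X_2)=\sigma(X_1)+\sigma(X_2).
\]

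For the Euler characteristic I would use its additivity for a decomposition into codimension-$0$ submanifolds glued along a common boundary. The tubular neighborhood $\nu F_i$ deformation retracts onto $F_i \cong \Sigma_g$, so $\chi(\nu F_i) = 2-2g$, while $\partial\nu F_i$ is a circle bundle over $\Sigma_g$ and hence has zero Euler characteristic. From $X_i = (X_i\setminus\nu F_i)\cup \nu F_i$ this gives $\chi(X_i\setminus\nu F_i) = \chi(X_i) + 2g-2$, and a second application to $X_1\#_\Psi X_2 = (X_1\setminus\nu F_1)\cup_\Psi (X_2\setminus\nu F_2)$ yields the claimed $\chi$-formula.

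For the signature I would invoke Novikov additivity along the common boundary $\partial\nu F_i$, which gives both $\sigma(X_i) = \sigma(X_i\setminus\nu F_i) + \sigma(\nu F_i)$ and $\sigma(X_1\#_\Psi X_2) = \sigma(X_1\setminus\nu F_1) + \sigma(X_2\setminus\nu F_2)$. The disk bundle $\nu F_i$ has rank-$1$ intersection form generated by the zero section, so $\sigma(\nu F_i) = \operatorname{sgn}([F_i]^2)$. Under the hypothesis $[F_1]^2 + [F_2]^2 = 0$ the two self-intersections have opposite signs (or both vanish), so $\sigma(\nu F_1)+\sigma(\nu F_2) = 0$, and the signature additivity on $X_1\#_\Psi X_2$ follows.

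Substituting these two computations into $c_1^2 = 2\chi + 3\sigma$ produces the extra $2(4g-4) = 8(g-1)$ term, and into $\chi_h = (\chi+\sigma)/4$ produces the extra $(4g-4)/4 = g-1$ term, giving exactly the stated formulas. No step is really an obstacle; the only point requiring care is checking that the hypothesis $[F_1]^2+[F_2]^2=0$ is precisely what kills the contribution $\sigma(\nu F_1)+\sigma(\nu F_2)$ in the signature ledger, which is where the assumption enters the argument.
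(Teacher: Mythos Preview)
Your argument is correct and follows the same route as the paper: reduce the lemma to the formulas $e(X_1\#_\Psi X_2)=e(X_1)+e(X_2)-2e(\Sigma)$ and $\sigma(X_1\#_\Psi X_2)=\sigma(X_1)+\sigma(X_2)$, then convert via $c_1^2=2e+3\sigma$ and $\chi_h=(e+\sigma)/4$. The paper simply quotes these two formulas as known, whereas you supply the Mayer--Vietoris/Novikov justification and spell out how the hypothesis $[F_1]^2+[F_2]^2=0$ forces $\sigma(\nu F_1)+\sigma(\nu F_2)=0$; this is extra detail rather than a different method.
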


\begin{proof}

The proofs of the above formulas simply are an easy consequence of the formulas

\begin{equation*}
e(X_{1}\#_{\Psi}X_{2})= e(X_{1}) + e(X_{2}) - 2e(\Sigma),\quad 
\sigma(X_{1}\#_{\Psi}X_{2}) = \sigma(X_{1}) + \sigma(X_{2}),    
\end{equation*} 

\noindent since $\chi_{h} = (\sigma  + e) / 4$ and  ${c_{1}^{2}} = 3\sigma + 2e$.

\end{proof}
 
\noindent If $X_{1}$, $X_{2}$ are symplectic manifolds and $F_{1}$, $F_{2}$ are symplectic submanifolds then according to theorem of Gompf \cite{G} $X_{1}\#_{\Psi}X_{2}$ admits a symplectic structure.  

\noindent We will use the following theorem  of M. Usher \cite{U} to show that the symplectic manifolds constructed in Sections 4 and 5 are minimal. 

\medskip

\begin{theorem}\cite{U} {\bf (Minimality of Sympletic Sums)} Let $Z = X_{1}\#_{F_{1} = F_{2}}X_{2}$ be sympletic fiber sum of manifolds $X_{1}$ and $X_{2}$. Then:

(i) If either $X_{1} \backslash F_{1}$ or $X_{2} \backslash F_{2}$ contains an embedded sympletic sphere of square $-1$, then $Z$ is not minimal.

(ii) If one of the summands $X_{i}$ (say $X_{1}$) admits the structure of an $S^{2}$-bundle over a surface of genus $g$ such that $F_{i}$ is a section of this fiber bundle, then $Z$ is minimal if and only if $X_{2}$ is minimal. 

(iii) In all other cases, $Z$ is minimal.

\end{theorem}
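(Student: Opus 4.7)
The strategy rests on combining Taubes's theorem $\ssw = \gr$ with a neck-stretching argument across the gluing hypersurface. Part (i) is immediate: a symplectic $(-1)$-sphere disjoint from $F_i$ inside $X_i$ is already contained in $X_i \setminus \nu F_i$ and hence survives untouched into $Z$, so $Z$ fails to be minimal.

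For the converse, assume $Z$ is non-minimal, so some class $e \in H_2(Z;\mathbb{Z})$ satisfies $e^2 = -1$ and $K_Z \cdot e = -1$. By Taubes's $\ssw = \gr$ theorem, for every $\omega_Z$-compatible almost complex structure $J$ on $Z$, the class $e$ is represented by an embedded $J$-holomorphic sphere $E$. Choose $J$ cylindrical on a collar of the gluing hypersurface $Y = \partial \nu F$, which is a principal circle bundle over the common surface $F$, and stretch the neck. A standard SFT-type compactness argument produces a stable holomorphic building whose levels are punctured holomorphic curves in the completions $\widehat{X_1 \setminus \nu F_1}$, $\widehat{X_2 \setminus \nu F_2}$, and in the symplectization $\mathbb{R} \times Y$, with matching asymptotics along Reeb orbits (the fibers of $Y \to F$).

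The crux is to extract topological information from this building. If no component crosses the neck, then $E$ lies wholly inside one $X_i \setminus \nu F_i$, placing us in case (i). Otherwise, writing $m$ for the common intersection number of either side's total relative class with $F_i$, the adjunction formula for each punctured piece combined with positivity of intersections forces very rigid constraints, since $E$ is embedded with $E^2 = -1$ and $c_1 \cdot [E] = 1$. One side, say $X_1$, must produce a single component whose closure meets $F_1$ transversally in the prescribed pattern and whose self-intersection matches the normal disk bundle of $F_1$. This in turn identifies a neighborhood of $F_1$ with half of a symplectic $S^2$-bundle on which $F_1$ is a section; a continuation argument using the induced fibration structure extends this recognition to all of $X_1$, placing us in case (ii). The converse direction of (ii) is then easy: a $(-1)$-sphere in $X_2$ can be isotoped off the square-zero submanifold $F_2$ using positivity of intersections, producing a $(-1)$-sphere in $X_2 \setminus \nu F_2$ and hence in $Z$.

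The main obstacle is the combinatorial and analytic bookkeeping in the second step: one must simultaneously control the arithmetic genus, the puncture multiplicities, and the relative homology classes of every component of the broken building, while excluding multiply-covered components in the symplectization levels whose intersection patterns could masquerade as simpler geometry. Handling this requires the refined index and intersection theory for punctured holomorphic curves, together with careful use of the fact that $[E]^2 = -1$ is the minimum possible self-intersection for an embedded symplectic sphere representing a non-trivial Gromov class.
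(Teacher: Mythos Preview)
The paper gives no proof of this statement: it is quoted from Usher~\cite{U} and invoked later as a black box to establish minimality of the manifolds $X(n,k)$. There is therefore nothing in the paper itself to compare your proposal against.

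Relative to Usher's actual argument, your outline is in the right spirit (Taubes plus a degeneration of the pseudoholomorphic $(-1)$-sphere across the neck), but the step where you recognize case~(ii) does not work as written. A tubular neighborhood of $F_1$ is \emph{always} a disk bundle, hence trivially ``half of an $S^2$-bundle'', so that observation carries no content, and there is no general ``continuation argument'' that promotes this local picture to a global ruling on $X_1$. What Usher actually extracts from the degeneration is an embedded symplectic sphere of nonnegative square in $X_1$; McDuff's classification of symplectic $4$-manifolds containing such a sphere then forces $X_1$ to be (a blowup of) a rational or ruled manifold, and the standing assumption that $X_1\setminus F_1$ contains no symplectic $(-1)$-sphere pins it down as an honest $S^2$-bundle with $F_1$ a section. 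Also, in your converse direction of~(ii) you assert that $F_2$ has square zero, which is false in general since only $[F_1]^2+[F_2]^2=0$ is assumed and a section of an $S^2$-bundle can have any self-intersection; the clean argument is rather that $X_1\setminus\nu F_1$ is itself a disk bundle of Euler number $[F_2]^2$, so the fiber sum is symplectically deformation equivalent to $X_2$ and minimality transfers directly.
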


%\subsection{Symplectic Kodaira Dimension}

%The notion of Kodaira dimension has been introduced by K. Kodaira for complex manifolds. It has played a fundamental role  
%in the classification theory of complex surfaces. For symplectic $4$-manifolds, there is also a notion of symplectic Kodaira 
%dimension (see \cite{Li2006, li, McDuffS, LeBrun}). 

%\begin{definition} \label{sym Kod}
%For a minimal symplectic $4-$manifold $(M^4,\omega)$ with symplectic
%canonical class $K_{\omega}$,   the Kodaira dimension of
%$(M^4,\omega)$ is defined in the following way:

$$
%\kappa^s(M^4,\omega)=\begin{cases} \begin{array}{lll}
%-\infty & \hbox{ if $K_{\omega}\cdot [\omega]<0$ or} & K_{\omega}\cdot K_{\omega}<0,\\
%0& \hbox{ if $K_{\omega}\cdot [\omega]=0$ and} & K_{\omega}\cdot K_{\omega}=0,\\
%1& \hbox{ if $K_{\omega}\cdot [\omega]> 0$ and} & K_{\omega}\cdot K_{\omega}=0,\\
%2& \hbox{ if $K_{\omega}\cdot [\omega]>0$ and} & K_{\omega}\cdot K_{\omega}>0.\\
%\end{array}
%\end{cases}
$$

%If $(M^4,\omega)$ is not minimal, its Kodaira dimension is defined to be that of any of its minimal models. 
%\end{definition}

%It is proved in \cite{Li2006} that the symplectic Kodaira dimension is a diffeomorphism invariant. 
%Also, it was shown in \cite{DZ} that the symplectic Kodaira dimension coincides with the complex Kodaira dimension 
%when both are defined.

\subsection{Luttinger surgery}
\label{subsec:Luttinger}

In this subsection, we will briefly review a Luttinger surgery. For the details, we refer the reader to \cite{Lu} and \cite{ADK}. Luttinger surgery has been very effective tool recently for constructing exotic smooth structures on $4$-manifolds.

\begin{definition} Let $X$\/ be a symplectic $4$-manifold with a symplectic form $\omega$, and the torus $\Lambda$ be a Lagrangian submanifold of $X$ with self-intersection $0$. Given a simple loop $\lambda$ on $\Lambda$, let $\lambda'$ be a simple loop on $\partial(\nu\Lambda)$ that is parallel to $\lambda$ under the Lagrangian framing. For any integer $m$, the $(\Lambda,\lambda,1/m)$ \emph{Luttinger surgery}\/ on $X$\/ will be 
$X_{\Lambda,\lambda}(1/m) = ( X - \nu(\Lambda) ) \cup_{\phi} (S^1 \times S^1 \times D^2)$,  the $1/m$\/ surgery on $\Lambda$ with respect to $\lambda$ under the Lagrangian framing. Here 
$\phi : S^1 \times S^1 \times \partial D^2 \to \partial(X - \nu(\Lambda))$  denotes a gluing map satisfying $\phi([\partial D^2]) = m[{\lambda'}] + [\mu_{\Lambda}]$ in $H_{1}(\partial(X - \nu(\Lambda))$, where $\mu_{\Lambda}$ is a meridian of $\Lambda$.

\end{definition}

It is  shown in \cite{ADK} that $X_{\Lambda,\lambda}(1/m)$ possesses a symplectic form that restricts to the original symplectic form $\omega$ on $X\setminus\nu\Lambda$. The following lemma is easy to verify and the proof is left as an exercise to the reader.

\begin{lemma}
\begin{enumerate}
\noindent \item $\pi_1(X_{\Lambda,\lambda}(1/m)) = \pi_1(X- \Lambda)/N(\mu_{\Lambda} \lambda'^m)$.\\
\noindent \item $\sigma(X)=\sigma(X_{\Lambda,\lambda}(1/m))$ and $e(X)=e(X_{\Lambda,\lambda}(1/m))$.
\end{enumerate}
\end{lemma}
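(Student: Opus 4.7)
Both statements follow by analyzing the decomposition
$$X_{\Lambda,\lambda}(1/m) \;=\; \bigl(X-\nu(\Lambda)\bigr)\cup_{\phi}\bigl(S^{1}\times S^{1}\times D^{2}\bigr),$$
and comparing it to the analogous decomposition of $X$ itself, where the glued piece is attached by the standard framing rather than by $\phi$.

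For part (1), the plan is to apply Van Kampen's theorem. Since $\nu(\Lambda)$ deformation retracts onto $\Lambda$, we have $\pi_{1}(X-\nu(\Lambda))=\pi_{1}(X-\Lambda)$. The second piece $S^{1}\times S^{1}\times D^{2}$ has $\pi_{1}\cong\Z\oplus\Z$, generated by the two $S^{1}$-factors, while the overlap $S^{1}\times S^{1}\times\partial D^{2}\cong T^{3}$ has abelian $\pi_{1}\cong\Z^{3}$. Two of these three generators are already identified with loops in $\partial(\nu\Lambda)\subset X-\nu(\Lambda)$ via the gluing $\phi$ and contribute no new relation beyond the identifications inside $\pi_{1}(X-\Lambda)$ (they are merely pushed-off copies of $\lambda$ and its Lagrangian companion on $\Lambda$, which already live in the complement). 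The third generator $[\partial D^{2}]$ bounds in $S^{1}\times S^{1}\times D^{2}$ and is therefore killed; by the definition of $\phi$, its image in $\pi_{1}(\partial(\nu\Lambda))$ is $\mu_{\Lambda}(\lambda')^{m}$ (the homological equation lifts to $\pi_{1}$ because $\pi_{1}(T^{3})$ is abelian). Van Kampen then yields
$$\pi_{1}\bigl(X_{\Lambda,\lambda}(1/m)\bigr)=\pi_{1}(X-\Lambda)/N\bigl(\mu_{\Lambda}(\lambda')^{m}\bigr),$$
as claimed.

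For part (2), I would use the Mayer--Vietoris/inclusion-exclusion formula for Euler characteristic together with Novikov additivity for signature. Both $X$ and $X_{\Lambda,\lambda}(1/m)$ are obtained by attaching the same piece $T^{2}\times D^{2}$ to $X-\nu(\Lambda)$ along $T^{3}$, only the gluing differs. Since $e(T^{2}\times D^{2})=0$ and $e(T^{3})=0$, the formula
$$e(X)=e(X-\nu\Lambda)+e(T^{2}\times D^{2})-e(T^{3})=e(X-\nu\Lambda)$$
gives $e(X)=e(X-\nu\Lambda)=e(X_{\Lambda,\lambda}(1/m))$. For signature, a brief computation shows $\sigma(T^{2}\times D^{2})=0$ (the intersection form vanishes since the only nontrivial class is $[T^{2}\times\{pt\}]$, which has self-intersection zero), and Novikov additivity of signature under gluing along $3$-manifolds then gives $\sigma(X)=\sigma(X-\nu\Lambda)=\sigma(X_{\Lambda,\lambda}(1/m))$.

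The only mildly delicate point is keeping track, in part (1), of which generators of $\pi_{1}(T^{3})$ are already identified with loops in $\pi_{1}(X-\Lambda)$ under $\phi$ and which ones yield genuinely new relations; once one observes that the push-offs of the Lagrangian generators already live in the complement and only $\mu_{\Lambda}(\lambda')^{m}$ is newly killed by the meridian disk $\{pt\}\times D^{2}$, the rest is a direct book-keeping exercise and is the step I would expect to be most prone to notational error, but it is not a serious obstacle.
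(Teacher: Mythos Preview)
Your argument is correct. The paper itself does not supply a proof of this lemma at all: immediately before the statement it says that the lemma ``is easy to verify and the proof is left as an exercise to the reader.'' So there is no proof in the paper to compare yours against; you have simply filled in the exercise.

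Your approach is the standard one. For part (1), your pushout computation via Van Kampen is exactly right: since $\pi_1(T^3)\to\pi_1(T^2\times D^2)$ is surjective with kernel generated by $[\partial D^2]$, the amalgamated pushout reduces to $\pi_1(X-\Lambda)$ modulo the normal closure of the image of that kernel, namely $\mu_\Lambda(\lambda')^{m}$; your remark that the homological gluing condition lifts to $\pi_1$ because $\pi_1(T^3)$ is abelian is the correct justification. For part (2), the inclusion--exclusion for $e$ and Novikov additivity for $\sigma$, together with $e(T^2\times D^2)=e(T^3)=0$ and $\sigma(T^2\times D^2)=0$, give the result immediately. There is nothing to correct.
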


%We shall make use the following theorem, which is due to C.-I. Ho and T.J. Li \cite{HoLi}, to study the diffeomorphism types of our examples. 

%\begin{theorem}
%The symplectic Kodaira dimension is unchanged under Luttinger surgery.
%\end{theorem}

\section{Symplectic Building Blocks}\label{sbb}

In this section, we collect symplectic building blocks that are needed in our construction of exotic $4$-manifolds. Our building blocks will be the total space of the Lefschetz fibrations over $2$-sphere constructed by M. Korkmaz \cite{Ko} and Y. Gurtas \cite{Gu}, along with the symplectic building blocks that were constructed by the first author in \cite {A1}. The symplectic $4$-manifolds in \cite{A1} were obtained via knot surgery along the fibered knots, but they can be constructed via the sequence of Luttinger surgeries. In this paper we use Luttinger surgery, which will have some advantages for us in the computation of the fundamental groups.

\subsection{Korkmaz's fibration}

For the convenience of the reader, we provide necessary background and state main the results in \cite{Ko}. Let us consider the case when $g = 2k$. Recall that the four manifold $Y(k) = \Sigma_{k} \times \mathbb{S}^2\#4\CPb$ is the total space of the well known genus $g$ Lefschetz fibration over $\mathbb{S}^2$ \cite{M, Ko}. This was shown by Y. Matsumoto for $k = 1$, and in the case $k \geq 2$ by M. Korkmaz, by factorizing the \emph{vertical} involution $\theta$ of the genus $2k$ surface (See Figure~\ref{fig:inv}).  

\begin{figure}[ht]
\begin{center}
\includegraphics[scale=.27]{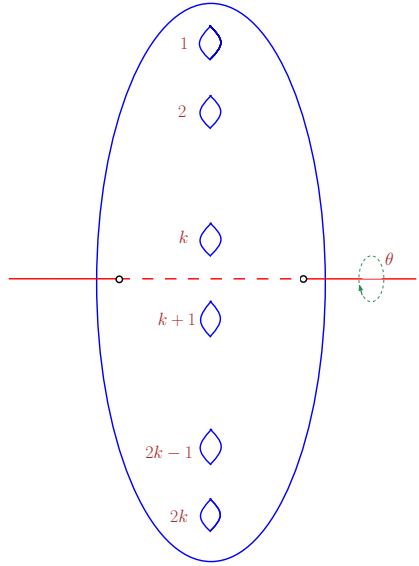}
\caption{The vertical involution $\theta$ of the genus $2k$ surface}
\label{fig:inv}
\end{center}
\end{figure}

The branched-cover description of the above Lefschetz fibrations can be given as follows: take a double branched cover of $\Sigma_{k} \times \mathbb{S}^2$ along the union of two disjoint copies of ${pt}\times \mathbb{S}^{2}$ and two disjoint copies of $\Sigma_{k} \times {pt}$ (See Figure~\ref{fig:relE}). The resulting branched cover has four singular points, corresponding to the number of the intersections points of the horizontal spheres and the vertical genus $k$ surfaces in the branch set. Next step is to desingularize this singular manifold to obtain $Y(k) = \Sigma_{k}\times S^2\#4\CPb$.  Observe that a generic fiber of the horizontal fibration is the double cover of $\mathbb{S}^2$, branched over two points. This gives a sphere fibration on $Y(k) = \Sigma_{k}\times \mathbb{S}^2\#4\CPb$. Also, a generic fiber of the vertical fibration is the double cover of $\Sigma_{k}$, branched over two points. Thus, a generic fiber is a genus $g$ surface. According to \cite{M, Ko}, each of the two singular fibers of the vertical fibration can be perturbed into $g+2$ Lefschetz type singular fibers. 

\begin{figure}[ht]
\begin{center}
\includegraphics[scale=.38]{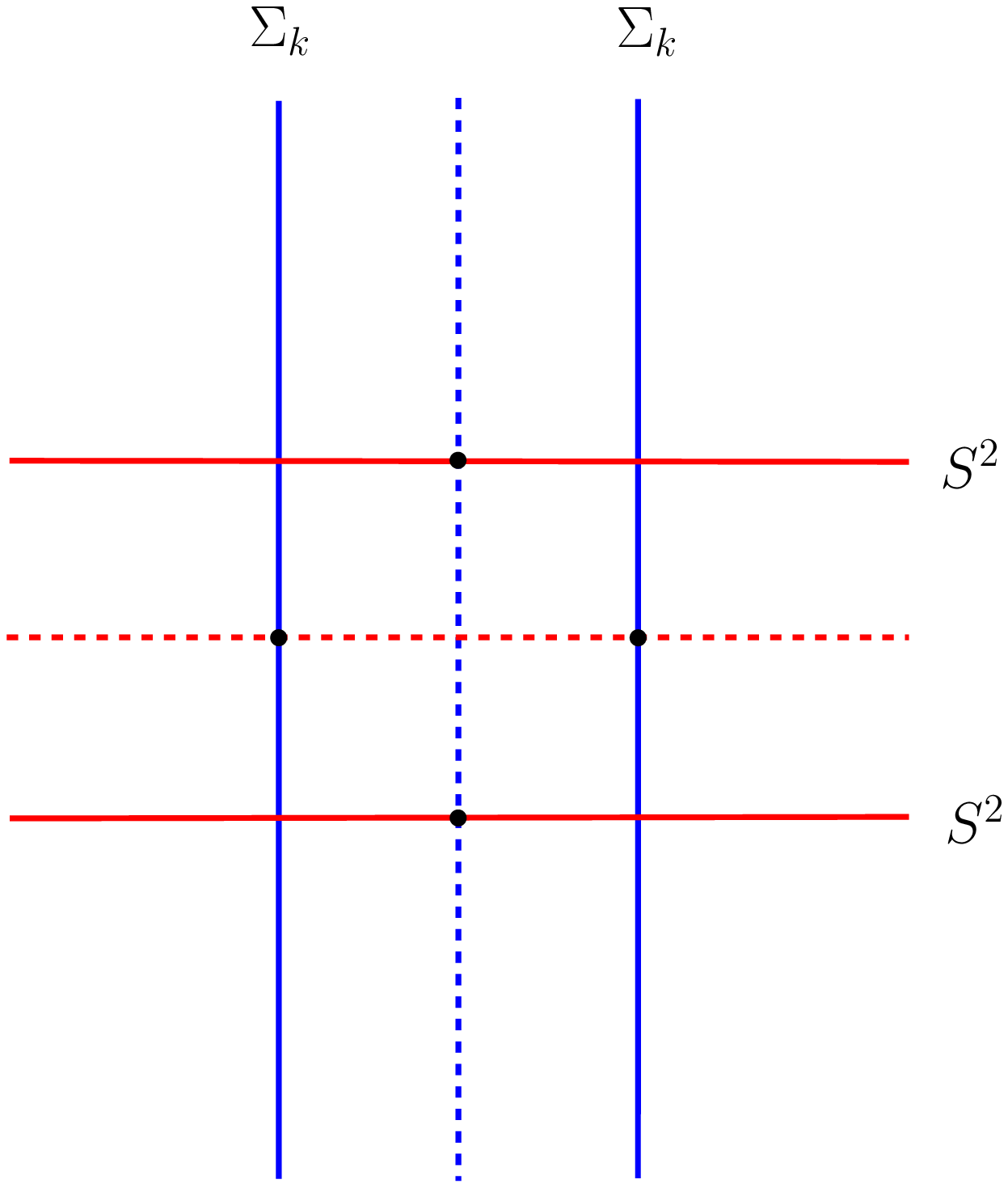}
\caption{The branch locus for $\Sigma_{k}\times S^2\#4\CPb$}
\label{fig:relE}
\end{center}
\end{figure}

Perhaps this is a good place to remark that when $g = 2k + 1$, the total space of the corresponding Lefschetz fibration is $\Sigma_{k} \times \mathbb{S}^2\#8\CPb$. In this case, the appropriate branched-cover description of the Lefschetz fibrations is given as follows: take a double branched cover of $\Sigma_{k} \times \mathbb{S}^2$ along the union of $4$ disjoint horizontal copies of ${pt}\times \mathbb{S}^{2}$ and two disjoint vertical copies of $\Sigma_{k} \times {pt}$. The resulting branched cover has $8$ singular points. By desingularize this manifold, we obtain $\Sigma_{k}\times S^2\#8\CPb$. A generic fiber of the vertical fibration is the double cover of $\Sigma_{k}$, branched over $4$ points. Thus, a generic fiber has genus $2k + 1$. See subsection~\ref{gfb} for more details, where this case occurs if we set $n = 2$ and $k \geq 1$.

In fact, the following theorem was proved in \cite{Ko}, which computes the global monodromy of the given Lefschetz fibration for both an even and an odd $g$.

\begin{theorem}\label{thm:theta} Let $\theta$ denote the vertical involution of the genus $g$ surface with $2$ fixed points. In the mapping class group $M_{g}$, the follwing relations between right Dehn twists hold: \ \

a) $(t_{B_{0}} t_{B_{1}}t_{B_{2}} \cdots t_{B_{g}}t_{c})^{2} = {\theta}^2 = 1$ if $g$ is even, \ \

b) $(t_{B_{0}} t_{B_{1}}t_{B_{2}} \cdots t_{B_{g}}(t_{a})^{2}(t_{b})^{2})^{2} = {\theta}^2 = 1$ if $g$ is odd. 

\end{theorem}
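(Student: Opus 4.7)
My plan is to establish this theorem by realizing the factorization geometrically from the branched-cover description of $Y(k) = \Sigma_k \times \mathbb{S}^2 \# 4\CPb$ (respectively $\Sigma_k \times \mathbb{S}^2 \# 8\CPb$) that has already been sketched earlier in the section. The equality $\theta^2 = 1$ in $M_g$ is automatic, since $\theta$ is a smooth involution of $\Sigma_g$; the substantive content is therefore the factorization of $\theta^2$ as the displayed product of positive Dehn twists, and this is what must be extracted from the Lefschetz fibration structure.

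For the even case $g = 2k$ I would view the generic fiber $\Sigma_g$ as the double cover of $\Sigma_k$ branched over two points, with $\theta$ the deck involution. The vertical projection of the singular branched cover $\Sigma_k \times \mathbb{S}^2 \to \mathbb{S}^2$ has exactly two degenerate fibers, corresponding to the two horizontal components $\{\mathrm{pt}\} \times \mathbb{S}^2$ of the branch locus. After blowing up the four nodes of the branched cover to produce $Y(k)$, each degenerate vertical fiber perturbs via the standard Matsumoto smoothing into $g+2$ nodal Lefschetz singular fibers. The heart of the argument is to identify the resulting vanishing cycles, in the correct cyclic order, as the chain $B_0, B_1, \ldots, B_g, c$ on $\Sigma_g$. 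This identification is carried out by tracking how the double cover lifts a system of arcs in a small disc on $\Sigma_k$ surrounding one of the branch points: the preimages of nested sub-arcs give the separating curves $B_0, \ldots, B_g$, while the preimage of a terminal arc that crosses the branch locus gives the nonseparating curve $c$. Once the vanishing cycles are identified, the fact that the product of all monodromies is trivial for a genus $g$ Lefschetz fibration over $\mathbb{S}^2$ forces the word to equal $1$, and the $\mathbb{Z}/2$-symmetry of $\mathbb{S}^2$ that interchanges the two degenerate fibers shows that the two halves of the product are identical, producing the square.

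The odd case $g = 2k+1$ is handled analogously, using the four-branch-point double cover description of $\Sigma_g$ over $\Sigma_k$; the extra pair of branch points forces two of the vanishing cycles, namely $a$ and $b$, to occur with multiplicity two in the relation, giving the factors $(t_a)^2 (t_b)^2$. The main obstacle will be the explicit identification of the vanishing cycles produced by the Matsumoto smoothing, and in particular getting the cyclic order of the Dehn twists to match the displayed word exactly. I would reduce to the base case of Matsumoto's fibration on $\mathbb{T}^2 \times \mathbb{S}^2 \# 4\CPb$ (that is $k=1$, $g=2$), where the curves $B_0, B_1, B_2, c$ are classical, and then bootstrap to higher $k$ by systematically inserting additional arcs between the branch points in $\Sigma_k$, each contributing one further separating curve $B_i$ to the chain. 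This local combinatorial analysis near the branch set is essentially the content of the original computations in \cite{M}, \cite{Ko} and \cite{Gu}, so my proof would organize itself as a careful review of those pictures rather than as a genuinely new calculation.
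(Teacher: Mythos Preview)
The paper does not prove this theorem; it is quoted verbatim as a result of Korkmaz \cite{Ko} (with the $g=2$ case due to Matsumoto \cite{M}), and only the branched-cover description of the total space is recalled for later use. So there is no ``paper's own proof'' to compare against---the statement functions here purely as background.

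Your outline is a plausible sketch of the geometric side of the Matsumoto--Korkmaz argument, but it contains a couple of inaccuracies and one genuine gap. First, you have the separating/nonseparating roles reversed: the curve $c=c_k=[a_1,b_1]\cdots[a_k,b_k]$ is the separating one (it bounds a genus-$k$ subsurface on each side), while the $B_i$ are nonseparating. Second, the $\mathbb{Z}/2$-symmetry of the base $\mathbb{S}^2$ does not by itself force the two halves of the global monodromy to be literally the \emph{same} word in $M_g$; it only gives that the monodromy around the second critical value is conjugate (via the half-monodromy, which is $\theta$ itself) to that around the first. Extracting the exact squared form $(t_{B_0}\cdots t_c)^2$ requires first knowing that the inner word already equals $\theta$, which is precisely the nontrivial content.

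The more serious issue is that your ``heart of the argument''---identifying the vanishing cycles of the perturbed fibers as exactly $B_0,\dots,B_g,c$ in the stated cyclic order---is asserted rather than carried out, and you ultimately defer it to \cite{M,Ko,Gu}. That is fine as an expository plan, but it means your proposal is not an independent proof: it is a geometric paraphrase of why such a relation should exist, with the actual computation outsourced. Korkmaz's proof in \cite{Ko} in fact proceeds more algebraically inside $M_g$, expressing $\theta$ directly as the displayed product of Dehn twists by manipulating known relations (chain and braid relations) rather than by reading off vanishing cycles from a degeneration; the branched-cover picture motivates the statement and identifies the total space, but does not by itself supply the factorization.
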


\noindent where $B_k$, $a$, $b$, $c$ are the simple closed curves defined as in Figure ~\ref{fig:relsE}.
 
\begin{figure}[ht]
\begin{center}
\includegraphics[scale=.43]{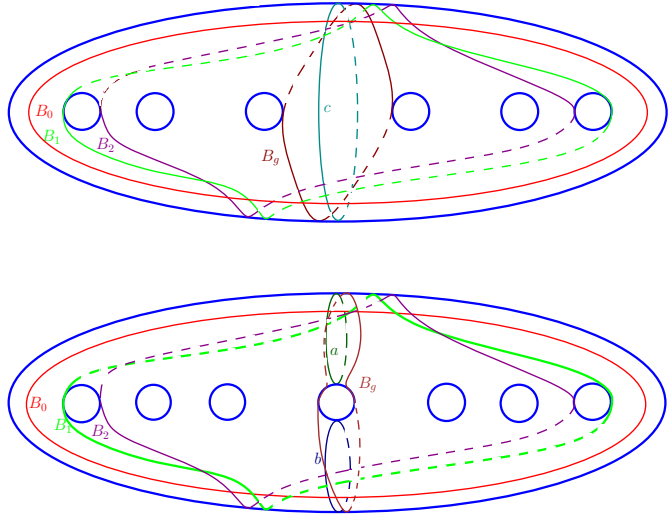}
\caption{The vanishing cycles}
\label{fig:relsE}
\end{center}
\end{figure}

Let us denote by $\Sigma_{2k}$ a regular fiber as the given Lefschetz fibration and the standard generators of fundamental group of $\Sigma_{2k}$ under the inclusion as $a_1, b_1, \cdots a_{2k}$ and $b_{2k}$. Using the homotopy exact sequence for a Lefschetz fibration, we have 

\begin {center} 

$\pi_{1}(\Sigma_{2k}) \longrightarrow \pi_{1}(Y(k)) \longrightarrow \pi_{1}(S^{2})$

\end{center}

\noindent According to \cite{Ko}, we have the following identification of the fundamental group of $Y(k)$: 

\medskip

$\pi_{1}(Y(k)) = \pi_{1}(\Sigma_{2k})/<B_{0},B_{1},\cdots,B_{g-1},B_{g},c> $ 

\medskip

\noindent It follows that the fundamental group of $Y(k)$ has a presentation with the generators $a_{1}$, $b_{1}$, $a_{2}$, $b_{2}$, $\cdots$, $a_{g}$, $b_{g}$ and the relations $[a_{1}, b_{1}][a_{2}, b_{2}] \cdots [a_{g}, b_{g}]=1$, $B_{0}=B_{1}=B_{2}= \cdots =B_{g}=c=1$. It was shown in \cite{Ko} the following identities hold

\noindent $B_{0} = b_{1}b_{2}\cdots b_{g}$ \

\noindent $B_{2i-1} = a_{i}b_{i}b_{i+1}\cdots b_{g+1-i}c_{g+1-i}a_{g+1-i}$,            $1\leq{i}\leq{k}$ \

\noindent $B_{2i} = a_{i}b_{i+1}b_{i+2} \cdots b_{g-i}c_{g-i}a_{g+1-i}$                 $1\leq{i}\leq{k-1}$ \

\noindent $B_{g} = B_{2k}=a_{k}c_{k}a_{k+1}$ \

\noindent $c = c_{r} = [a_{1}, b_{1}][a_{2}, b_{2}]\cdots [a_{r}, b_{r}]$ \

\medskip

Now, we prove a lemma which we use in the sequel.

\begin{lemma}\label{sixl}

The following relations hold in the fundamental group of $Y(k)$

\begin{gather}\label{Luttinger relations}
a_{1}a_{2k} = 1,\ \  a_2a_{2k-1}=1,\ \  \cdots ,\ \  a_{k}a_{k+1}=1,\\ \nonumber
b_{1}b_{2}\cdots b_{2k} = 1,\ \  b_2b_{3}\cdots b_{2k-1}= [a_{2k},b_{2k}],\ \  \cdots , \\ \nonumber
b_{i+1}b_{i+2} \cdots b_{g-i} = [a_{2k-i+1},b_{2k-i+1}] \cdots [a_{2k-1},b_{2k-1}][a_{2k},b_{2k}].\\ \nonumber
\end{gather}

\end{lemma}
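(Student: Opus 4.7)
The plan is to extract everything from the presentation of $\pi_1(Y(k))$ given just above the lemma: the generators $a_1,b_1,\ldots,a_g,b_g$ with $g=2k$, the surface relation $c_g=[a_1,b_1]\cdots[a_g,b_g]=1$, and the vanishing-cycle relations $B_0=B_1=\cdots=B_g=c=1$. The first identity $b_1b_2\cdots b_{2k}=1$ is simply $B_0=1$. The remaining identities I plan to prove by a single induction on $i$ which alternates between the relations $B_{2i}=1$ and $B_{2i+1}=1$.

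For the base case I unpack $B_1=a_1(b_1b_2\cdots b_g)\,c_g\,a_g=1$: substituting $B_0=1$ and $c_g=1$ gives $a_1a_{2k}=1$ at once. For the inductive step, assume $a_j a_{g+1-j}=1$ has been established for every $1\le j\le i$ with $1\le i\le k-1$. Then I use the relation $B_{2i}=a_i(b_{i+1}b_{i+2}\cdots b_{g-i})\,c_{g-i}\,a_{g+1-i}=1$ together with $a_{g+1-i}^{-1}=a_i$ to read off
\[
(b_{i+1}b_{i+2}\cdots b_{g-i})\,c_{g-i}=a_i^{-1}a_{g+1-i}^{-1}=a_i^{-1}a_i=1,
\]
which rearranges to $b_{i+1}\cdots b_{g-i}=c_{g-i}^{-1}$. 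The surface relation $c_g=1$ rewrites $c_{g-i}^{-1}$ as $[a_{2k-i+1},b_{2k-i+1}]\cdots[a_{2k},b_{2k}]$, giving exactly the $b$-identity at level $i$. Feeding the intermediate equation $(b_{i+1}\cdots b_{g-i})\,c_{g-i}=1$ into $B_{2i+1}=a_{i+1}(b_{i+1}\cdots b_{g-i})\,c_{g-i}\,a_{g-i}=1$ then collapses it to $a_{i+1}a_{g-i}=1$, which is the next $a$-identity.

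Iterating this step for $i=1,\ldots,k-1$ produces every identity in the list, and the last pair $a_k a_{k+1}=1$ is obtained at the $i=k-1$ stage; alternatively it can be read off directly from $B_g=a_k c_k a_{k+1}=1$ combined with $c=c_k=1$. I expect the main obstacle to be nothing more than notational bookkeeping: once one spots the alternating pattern \emph{``$B_{2i}$ yields the $b$-identity at level $i$, and $B_{2i+1}$ then promotes it to the next $a$-identity,''} the cancellations are automatic. Notably, no vanishing-cycle relation beyond the surface relation $c_g=1$ is required in the argument.
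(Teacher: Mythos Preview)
Your proof is correct and follows essentially the same route as the paper: both arguments use $B_0=1$ and $c_g=1$ inside $B_1=1$ to get $a_1a_{2k}=1$, then proceed by pairing consecutive $B$-relations (the paper compares $B_{2i-2}$ with $B_{2i-1}$, you compare $B_{2i}$ with $B_{2i+1}$) so that the shared middle block $(b_{\bullet}\cdots b_{\bullet})\,c_{\bullet}$ cancels to yield the next $a$-identity, and both invoke $c_g=1$ to rewrite $c_{g-i}^{-1}$ as the product of commutators for the $b$-identities. The only difference is cosmetic: the paper first derives all the $a$-identities and then the $b$-identities, whereas you interleave them in a single induction; your final remark should perhaps say that the separating vanishing-cycle relation $c=c_k=1$ is not needed, since you certainly do use the $B_i=1$ vanishing-cycle relations throughout.
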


\begin{proof} Using the relations $B_{0} = b_{1}b_{2}\cdots b_{g} = 1$, $B_{1} = a_{1}b_{1}b_{2}\cdots b_{g}c_{g}a_{g} = 1$, and $c_{g} = 1$ in the fundamental group of $Y(k)$, we easily see that $a_{1}a_{g} = a_{1}a_{2k} = 1$. Next, using the relations $B_{2}$ = $a_{1}b_{2}b_{3}\cdots b_{g-1}c_{g-1}a_{g} = 1$, $B_{3}$ = $a_{2}b_{2}b_{3}\cdots b_{g-1}c_{g-1}a_{g-1} = 1$, and $a_{1}a_{g} = 1$, we obtain $a_{2}a_{g-1}$ = $a_{2}a_{2k-1}$ = $1$. By continuing in this fashion (i.e. using the relations $B_{2i-2}$ = $a_{i-1}b_{i}b_{i+1} \cdots b_{g-i+1}c_{g-i+1}a_{g-i+2}$, $B_{2i-1}$ = $a_{i}b_{i}b_{i+1}\cdots b_{g-i+1}c_{g-i+1}a_{g+1-i}$ = $1$, and $a_{i-1}a_{g-i+2}=1$), we have $a_{i}a_{g-i+1}=1$ for any $i$ between $1$ and $k$. Furthemore, by considering the relations $B_{0}$ = $b_{1}b_{2}\cdots b_{g} = 1$, $c_{g-1} = [a_g, b_g]^{-1}$, and $a_{1}a_{g} = 1$, we have $1$ = $B_{2}$ = $a_{1}b_{2}b_{3}\cdots b_{g-1}c_{g-1}a_{g}$ = $b_{2}b_{3}\cdots b_{g-1}c_{g-1}$ = $b_{2}b_{3}\cdots b_{g-1} (b_{g}a_{g}{b_{g}}^{-1}{a_{g}}^{-1})$ = $1$. Consequently, using the relations $B_{2i}$ = $a_{i}b_{i+1}b_{i+2} \cdots b_{g-i}c_{g-i}a_{g+1-i} = 1$, and $a_{i}a_{g+i-1} = 1$, we have $b_{i+1}b_{i+2} \cdots b_{g-i} = [a_{g-i+1},b_{g-i+1}] \cdots [a_{g-1},b_{g-1}][a_g,b_g]$ for any $i$ between $1$ and $k$.          

\end{proof}

Using the Lemma above, we see that the fundamental group of $Y(k) = \Sigma_{k}\times \mathbb{S}^2\#4\CPb$ is isomorphic to the surface group $\Pi_k = \pi_1(\Sigma_k)$, generated by loops $a_1$, $b_{1}$, $\cdots$, $a_{k}$, and $b_{k}$. Furthemore, the fundamental group of the complement of $Y(k) \setminus \nu (\Sigma_{2k})$ is also $\Pi_k$. The normal circle $\mu = {pt} \times \partial (\mathbb{D}^{2})$ to $\Sigma_{2k}$ can be deformed using an exceptional sphere section, thus trivial in $\pi_{1}(Y(k) \setminus \nu \Sigma_{2k})$.

%\begin{theorem}\cite{OS,Ko} For every $g \geq 2$, there are infinitely many pairwise non-homemorphic $4$-manifolds that admit genus $g$ Lefschetz fibration over $\mathbb{S}^2$ but do not carry any complex structure with either orientation. Furthermore, the Lefschetz fibration on $X(g,n)$ admits at least two disjoint $-2$ sphere sections.
%\end{theorem}

\subsection{Gurtas' fibration}\label{gfb}

In \cite{Gu}, Yusuf Gurtas generalized the constructions in \cite{M, Ko} even further. In \cite{Gu} he presented the positive Dehn twist expression for a new set of involutions in the mapping class group $M_{h+v}$ of a compact, closed, oriented $2$-dimensional surface $\Sigma_{h+v}$. These involutions were obtained by gluing the horizontal involution on a surface $\Sig_{h}$ and the vertical involution on a surface $\Sig_{v}$, where $v$ is a positive even number. Let $\theta$ denote the involution under consideration on the surface $\Sig_{h+v}$, as shown in Figure~\ref{fig:f4} below. According to Gurtas \cite{Gu}, $\theta$ can be expressed as a product of $8h + 2v + 4$ positive Dehn twists. Observe that if we set $h = 0$ and $v \geq 2$, we recover the family of Lefschetz fibrations in \cite{M, Ko}.

\begin{figure}[ht]
\begin{center}
\includegraphics[scale=.43]{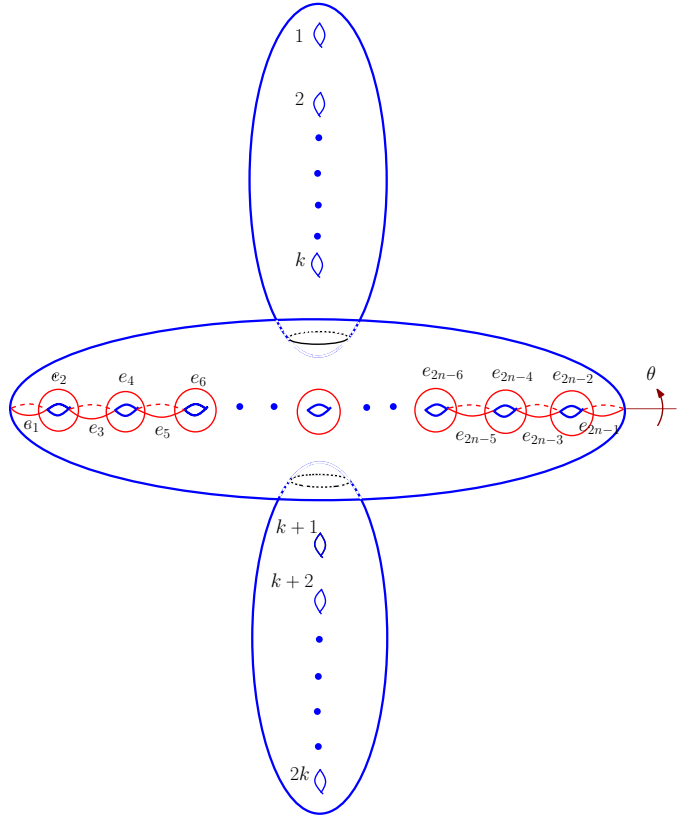}
\caption{The involution $\theta$ of the surface $\Sigma_{2k+n-1}$}
\label{fig:f4}
\end{center}
\end{figure}

In the above notation, let us set $n = h + 1$ and $v = 2k$, for reasons which will be clear soon. Let $Y(n,k)$ denote the total space of the Lefschetz fibration defined by the word $\theta^{2} = 1$ in the mapping class group $M_{2k+n-1}$. The manifold $Y(n,k)$ has a genus $g = 2k + n - 1$ Lefschetz fibration over $\mathbb{S}^{2}$ with $s = 8h + 2v + 4 = 8(n-1) + 2(2k) + 4 = 8n + 4k - 4$ singular fiberes and the vanishing cycles all are about nonseparating curves \cite{Gu}. The Euler characteristic of the symplectic $4$-manifold $Y(n,k)$ can be computed using the following formula $e(Y(n-1, k)) = e(\mathbb{S}^2)e(F) + s = 2(2-2(n+ 2k -1)) + 8n + 4k - 4 = 4n - 4k + 8$. The signature of the Lefschetz fibration described by the word $\theta^{2} = 1$ was computed in \cite{Gu}: $\sigma(Y(n,k)) = -4(n+2k-1)$ (see also related work in \cite{Yu}). Now, using the formulas $\chi_{h} = (\sigma  + e) / 4$,  ${c_{1}^{2}}= 3\sigma + 2e$, we compute: \ $\chi_{h}(Y(n,k)) = 1 - 3k$ \, and \ ${c_{1}^{2}}(Y(n,k)) = -4(n+4k-3)$.

%\begin{figure}[ht]
%\begin{center}
%\includegraphics[scale=.38]{f8.eps}
%\caption{The vanishing cycles of Gurtas fibration}
%\label{fig:f8}
%\end{center}
%\end{figure}

It is not too difficult to determine what should be the appropriate branched-cover description of the above Lefschetz fibrations. It is given as follows: take a double branched cover of $\Sigma_{k} \times \mathbb{S}^2$ along the union of $2n$ disjoint copies of ${pt}\times \mathbb{S}^{2}$ and two disjoint copies of $\Sigma_{k} \times {pt}$ (See Figure~\ref{fig:gbc}). The resulting branched cover has $4n$ singular points, corresponding to the number of the intersections points of the horizontal spheres and the vertical genus $k$ surfaces in the branch set. We desingularize this manifold to obtain $Y(n,k) = \Sigma_{k}\times \mathbb{S}^2\#4n\CPb$. A generic horizontal fiber is the double cover of $\mathbb{S}^2$, branched over two points. Thus, we have a sphere fibration on $Y(n,k) = \Sigma_{k}\times \mathbb{S}^2\#4\CPb$. A generic fiber of the vertical fibration is the double cover of $\Sigma_{k}$, branched over $2n$ points. Thus, a generic fiber of the vertical fibration has genus $n + 2k - 1$. Furthermore, two complicated singular fibers of the vertical fibration can be perturbed into $4n + 2k - 2$ Lefschetz type singular fibers. 

\begin{figure}[ht]
\begin{center}
\includegraphics[scale=.43]{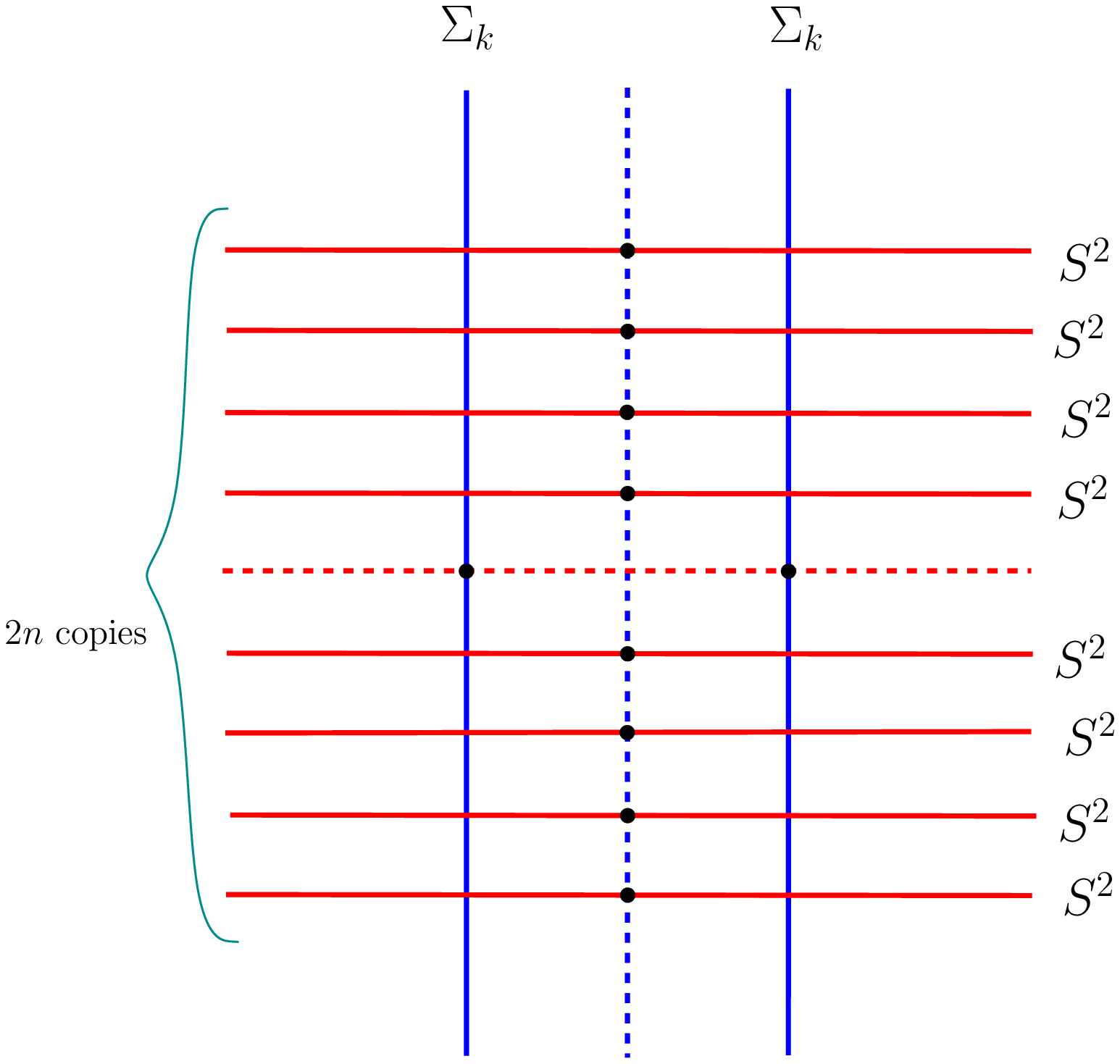}
\caption{The branch locus for $\Sigma_{k}\times S^2\#4n\CPb$}
\label{fig:gbc}
\end{center}
\end{figure}

Next, we recall the main theorem of \cite{Gu}, which will be needed in the proof of our Lemma~\ref{sixl}. We refer the reader to \cite{Gu} for any unexplained notation. 

\begin{theorem} \label{mainthm} The positive Dehn twist expression for the involution $\theta$ is given by
\[
\theta =e_{2i+2}\cdots e_{2n-2}e_{2h-1}e_{2i}\cdots
e_{2}e_{1}B_{0}e_{2n-1}e_{2n-2}\cdots e_{2i+2}e_{1}e_{2}\cdots
e_{2i}B_{1}B_{2}\cdots B_{4k-1}B_{4k}e_{2i+1}.
\]
\end{theorem}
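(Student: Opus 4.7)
The plan is to verify the factorization by decomposing the involution $\theta$ into horizontal and vertical pieces and then showing that the stated word in the $e_i$'s and $B_j$'s realizes their composition in $M_{2k+n-1}$. Geometrically, $\theta$ preserves a distinguished separating simple closed curve that splits $\Sigma_{2k+n-1}$ into a genus $2k$ subsurface on which $\theta$ restricts to Korkmaz's vertical involution, and a genus $h = n-1$ subsurface on which $\theta$ restricts to a hyperelliptic involution of Matsumoto type. Reading off the factorization then amounts to stitching together the two restricted factorizations along the separating curve.

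First I would record a positive Dehn twist expression for each restricted involution separately. For the vertical piece, Korkmaz's Theorem~\ref{thm:theta} provides $(t_{B_0}t_{B_1}\cdots t_{B_{2k}}t_c)^2 = 1$, so the vertical involution equals the word $t_{B_0}\cdots t_{B_{2k}}t_c$ up to inversion; this supplies the $B_0 B_1\cdots B_{4k}$ cluster in the statement after unfolding the squaring into $4k+1$ distinct $B_j$'s adapted to the branched cover description in Figure~\ref{fig:gbc}. For the horizontal piece I would invoke the classical hyperelliptic chain relation on $\Sigma_h$ with two boundary components, which expresses the horizontal involution as a palindromic product in a chain of curves $e_1, \ldots, e_{2n-1}$ and accounts for the $e_i$-blocks on either side of the $B_0$ factor.

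Next I would glue the two words along the shared separating curve. Two nontrivial effects arise: the boundary curves of the two pieces get identified, so the corresponding boundary-parallel Dehn twists merge into the factor $t_{B_0}$; and the two pieces, although glued along one curve, have vanishing cycles that now interact in the ambient surface. Managing this interaction is where the only two mapping class group moves available must be used systematically, namely commutation $t_\alpha t_\beta = t_\beta t_\alpha$ when $\alpha$ and $\beta$ are disjoint, and the braid relation $t_\alpha t_\beta t_\alpha = t_\beta t_\alpha t_\beta$ when they meet once. The net effect is that most of the $e_i$'s can be commuted past the $B_j$ block, while the remaining $e_i$'s stay braided with the $B_j$'s, producing the specific asymmetric ordering of the statement with the isolated factor $e_{2i+1}$ trailing at the right.

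The main obstacle will be the detailed combinatorial bookkeeping in this gluing step: there are $8n+4k-4$ positive Dehn twists on the right hand side, and verifying that a given ordering equals $\theta$ on the nose rather than $\theta$ modified by a handful of stray twists requires careful tracking of intersection numbers among all the vanishing cycles. A clean organizational device would be induction on $n$: the base case $n=1$ reduces directly to Theorem~\ref{thm:theta}, while the inductive step corresponds to enlarging the horizontal piece by one handle, which diagrammatically inserts a new symmetric pair of $e_i$'s on either side of $B_0$. The inductive step is where the detailed argument of Gurtas in \cite{Gu} carries essentially all of the load, and I would expect that to be where most of the proof's complexity lives.
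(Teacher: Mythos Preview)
The paper does not prove this theorem at all. Immediately before the statement it says ``Next, we recall the main theorem of \cite{Gu}, which will be needed in the proof of our Lemma~\ref{sixl}. We refer the reader to \cite{Gu} for any unexplained notation.'' Theorem~\ref{mainthm} is quoted verbatim from Gurtas's paper as a black box, and no proof or sketch is offered here; the theorem is used only insofar as it identifies the vanishing cycles $e_1,\dots,e_{2n-1},B_0,\dots,B_{4k}$ whose triviality in $\pi_1$ feeds into the subsequent lemmas.

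Consequently there is no ``paper's own proof'' to compare your proposal against. Your outline --- split $\theta$ into Korkmaz's vertical involution on the genus-$2k$ piece and a hyperelliptic involution on the genus-$h$ piece, then stitch the two factorizations along the separating curve using commutation and braid relations --- is a plausible high-level narrative of how Gurtas's argument in \cite{Gu} is organized, and indeed you concede at the end that the inductive step ``is where the detailed argument of Gurtas in \cite{Gu} carries essentially all of the load.'' But that concession is the whole point: what you have written is a proof \emph{plan}, not a proof, and the substantive combinatorics (tracking which $e_i$'s braid with which $B_j$'s, and why the particular asymmetric ordering with the trailing $e_{2i+1}$ is forced) is exactly what is deferred. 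Since the present paper also defers entirely to \cite{Gu}, your proposal neither matches nor diverges from anything here --- it simply elaborates on a citation.
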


Now, we prove two lemmas which we use in the sequel. 

\begin{lemma}\label{sixl}

The following relations hold in the fundamental group of $Y(n,k)$

\begin{gather}\label{Luttinger relations}
e_{1} = 1,\ \  e_2 =1,\ \  \cdots ,\ \  e_{2n-2} =1, \ \  e_{2n-1} = 1 \\ \nonumber
a_{1}a_{2k} = 1,\ \  a_2a_{2k-1}=1,\ \  \cdots ,\ \  a_{k}a_{k+1}=1,\\ \nonumber
\end{gather}

\end{lemma}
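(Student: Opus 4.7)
The plan is to apply Lemma~\ref{lemma:pi1} to the Gurtas fibration $Y(n,k) \to \mathbb{S}^2$, which has genus $g = n+2k-1$. Since this fibration admits a sphere section (arising, as in the Korkmaz case, from the exceptional spheres of the blowups in the branched-cover description), the lemma identifies $\pi_1(Y(n,k))$ with $\pi_1(\Sigma_g)$ modulo the normal closure of the collection of vanishing cycles appearing in Theorem~\ref{mainthm}. These vanishing cycles are exactly the curves $e_1, e_2, \ldots, e_{2n-1}$ (and $e_{2i+1}$) together with $B_0, B_1, \ldots, B_{4k}$.

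First I would read off the relations $e_1 = e_2 = \cdots = e_{2n-1} = 1$ directly, since each $e_j$ is a vanishing cycle and is therefore killed in $\pi_1(Y(n,k))$. This gives the first row of the claimed relations immediately and costs nothing beyond invoking Lemma~\ref{lemma:pi1}.

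Next, to obtain the relations $a_i a_{2k-i+1} = 1$ for $1 \le i \le k$, I would substitute the identities $e_j = 1$ into Gurtas' explicit expressions for $B_0, B_1, \ldots, B_{4k}$ as words in the standard generators $a_1, b_1, \ldots, a_g, b_g$ of $\pi_1(\Sigma_g)$ (together with the $e_j$'s). The expectation is that after this substitution, successive pairs $B_{2i-1}, B_{2i}$ reduce to words formally identical to the ones used in the Korkmaz proof of Lemma~\ref{sixl} for $Y(k)$: namely, $B_{2i-1}$ becomes $a_i b_i b_{i+1} \cdots b_{g+1-i} c_{g+1-i} a_{g+1-i}$ (or its analogue), and $B_{2i}$ becomes $a_i b_{i+1} \cdots b_{g-i} c_{g-i} a_{g+1-i}$. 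Combining these two $B$-relations and using the inductive hypothesis $a_{i-1} a_{g-i+2} = 1$ at each step, one peels off an outer pair of $a$-generators to deduce $a_i a_{g+1-i} = 1$. The base case $a_1 a_g = 1$ comes from $B_0$ and $B_1$ together with $c_g = 1$, exactly as in Lemma~\ref{sixl}.

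The main obstacle I anticipate is precisely the bookkeeping in the previous paragraph: Gurtas' word for $\theta$ mixes the horizontal involution (which contributes the $e_j$ twists) with the vertical involution (which contributes the $B_j$ twists), so the raw expressions for $B_j$ involve both $e_j$-type pieces and $a_i, b_i$-type pieces, and they are more tangled than in the purely vertical setting treated by Korkmaz. The key observation that makes the argument go through is that each $e_j$ becomes a vanishing cycle, so setting $e_j = 1$ in the $B_j$ words truncates them to the same shape as Korkmaz's and reduces the computation to the one already completed. Once this reduction is verified curve-by-curve from Gurtas' conventions, the inductive argument in the proof of the earlier Lemma~\ref{sixl} applies verbatim.
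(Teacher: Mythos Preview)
Your proposal is correct and follows essentially the same approach as the paper: first observe that each $e_j$ is a vanishing cycle and hence trivial in $\pi_1(Y(n,k))$, then reduce the $B_j$ relations to the Korkmaz form and repeat the inductive argument from the earlier Lemma~\ref{sixl} verbatim. The only minor point is that the ``main obstacle'' you anticipate does not actually arise: in Gurtas' construction the curves $B_0,\ldots,B_{2k}$ live on the vertical genus $2k$ subsurface and already coincide with Korkmaz's curves, so no substitution of $e_j=1$ into the $B_j$ words is needed --- the paper simply quotes the Korkmaz expressions for $B_j$ directly and declares the rest of the proof identical.
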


\begin{proof} Notice that the first set of relations simply follows from the fact that the Dehn twists along the curves $e_1$, $e_2$, $\cdots$, $e_{2n-1}$ appear in the factorization of $\theta$. To prove the second set of relations, we use the relations $B_{0} = b_{1}b_{2}\cdots b_{2k} = 1$, $B_{1} = a_{1}b_{1}b_{2}\cdots b_{2k}c_{2k}a_{2k} = 1$, $\cdots$, $B_{2i-1} = a_{i}b_{i}b_{i+1} \cdots b_{2k+1-i}c_{2k+1-i}a_{2k+1-i}$, $B_{2i} = a_{i}b_{i+1}b_{i+2} \cdots b_{2k-i}c_{2k-i}a_{2k+1-i}$, $\cdots$, $B_{2k} = a_{k}c_{k}a_{k+1}$, and $c_{k} = 1$ in the fundamental group of $Y(n,k)$. The proof is identical to the proof of Lemma~\ref{sixl} and therefore is omitted.

\end{proof}

\begin{lemma}\label{sixl}

The genus $2g+n-1$ Lefschetz fibration on $Y(n,k)$ admits at least $4n$ disjoint $-1$ sphere sections. 

\end{lemma}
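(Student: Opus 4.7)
The strategy is to identify the $4n$ sections directly from the branched cover construction of $Y(n,k)$ described above the statement. Recall that $Y(n,k)$ is obtained by desingularizing the branched double cover of $\Sigma_k\times\mathbb{S}^2$ over the union of $2n$ horizontal spheres $H_i=\{p_i\}\times\mathbb{S}^2$ and two vertical surfaces $V_j=\Sigma_k\times\{q_j\}$, whose $4n$ singular points are exactly the intersections $(p_i,q_j)$. Each desingularization introduces an exceptional $(-1)$-sphere $E_{ij}$, and these $4n$ spheres are pairwise disjoint because they sit over distinct singular points.

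The key step is to verify that each $E_{ij}$ is a section of the vertical Lefschetz fibration $Y(n,k)\to\mathbb{S}^2$, which is a small perturbation of the composition $Y(n,k)\to\Sigma_k\times\mathbb{S}^2\to\mathbb{S}^2$. Before perturbation, the two fibers over $q_1$ and $q_2$ are highly degenerate and contain all of the $E_{ij}$; after perturbation they split into $4n+2k-2$ Lefschetz-type singular fibers. A local analysis in a neighborhood of each $(p_i,q_j)$ shows that $E_{ij}$ intersects every nearby fiber of the perturbed fibration transversely in exactly one point, whence $E_{ij}$ is transverse to every regular fiber globally and is a section of self-intersection $-1$.

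Alternatively, the lemma admits a mapping class group proof in the spirit of \cite{T}. Gurtas's monodromy factorization $\theta^2=1$ from Theorem~\ref{mainthm} lifts to a factorization $\theta^2 = t_{\delta_1}t_{\delta_2}\cdots t_{\delta_{4n}}$ in the mapping class group of $\Sigma_g$ with $4n$ boundary components, where each $t_{\delta_l}$ is the Dehn twist about the $l$-th boundary-parallel curve. By the standard correspondence between lifts of monodromy factorizations and section systems, such a lift produces $4n$ disjoint sections each of self-intersection $-1$.

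The main obstacle is to justify the local transversality claim in the first approach, or equivalently to verify the existence of the lift with the correct boundary-twist coefficients in the second approach. Both reduce to checking that the curves appearing in Gurtas's factorization of Theorem~\ref{mainthm} can be represented disjointly from a prescribed collection of $4n$ marked points on the reference fiber, which we take to be the preimages of the singular points $(p_i,q_j)$ in the branched cover. This follows from a careful inspection of Gurtas's geometric construction of the involution $\theta$, since the curves $e_j$ and $B_l$ are all non-separating and can be isotoped off the fixed point set of $\theta$.
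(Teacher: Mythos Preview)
Your first approach contains a genuine gap. Under the composition of the desingularisation map, the branched covering map, and the projection to $\mathbb{S}^2$, every exceptional curve $E_{ij}$ is sent to the single point $q_j\in\mathbb{S}^2$; hence $E_{ij}$ is a \emph{component of the fibre} over $q_j$, not a section. Perturbing the fibration map is a homotopy and does not change the homology class of the fibre; since any irreducible component $A$ of a fibre $F$ satisfies $[A]\cdot[F]=0$ (it is disjoint from every other fibre), the same intersection number persists after perturbation, whereas a section must have intersection number $1$ with the fibre class. So the inference ``local transversality $\Rightarrow$ global section'' cannot hold: a sphere that lies entirely over a small disc about $q_j$ can never become a section of a fibration over all of $\mathbb{S}^2$. (Note also that the minimal resolution of the nodal double cover produces $(-2)$-spheres, not $(-1)$-spheres; the identification $Y(n,k)\cong\Sigma_k\times\mathbb{S}^2\#4n\CPb$ comes from the \emph{horizontal} sphere fibration and does not send those resolution curves to the exceptional classes $E_i$.)

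Your second approach is in the right spirit but is only a restatement of what must be proved. Saying that Gurtas's word lifts to the mapping class group of $\Sigma_g$ with $4n$ boundary components, with each boundary twist occurring exactly once, \emph{is} the lemma; the observation that the vanishing cycles can be isotoped off the fixed-point set of $\theta$ is necessary but nowhere near sufficient to determine the boundary-twist multiplicities.

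The paper takes a different and shorter route. It realises $Y(n,k)$ as a symplectic fibre sum of $\Sigma_k\times\mathbb{S}^2$ with the hyperelliptic rational surface $\CP\#(4n+1)\CPb$ along a sphere fibre of the latter; since a generic sphere fibre there meets a generic genus-$(n-1)$ fibre in two points, the sum produces the genus-$(2k+n-1)$ fibration on $Y(n,k)$. The lemma then follows from the already known fact (cited from \cite{T} and \cite{ako}) that the hyperelliptic genus-$(n-1)$ fibration on $\CP\#(4n+1)\CPb$ carries $4(n-1)+4=4n$ disjoint $(-1)$-sphere sections, which survive the fibre sum. This reduction to the hyperelliptic case avoids both the branched-cover bookkeeping and the explicit lift of Gurtas's factorisation.
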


\begin{proof} We first observe that $Y(n,k)$ is the symplectic sum $Y(k) = \Sigma_{k}\times \mathbb{S}^2$ and $X(n,1) = \CP\#(4n+5)\CPb$ along the spheres $\pt \times \mathbb{S}^2$ and the sphere fiber of $X(n,1)$. Since a generic sphere fiber of $X(n,1)$ intersects a generic genus $n-1$ fiber at two points, after the symplectic sum we obtain a genus $2k + n - 1$ fibration on $Y(n,k)$. Since for $n \geq 2$, the genus $n-1$ fibration on $X(n,1)$ admits $4n$ disjoint $(-1)$-sphere sections (see \cite{T}), these $-1$ sphere sections extends to $Y(n,k)$.          

\end{proof}

\subsection{Luttinger surgeries on product manifolds $\Sigma_{n}\times \Sigma_{2}$ and $\Sigma_{n}\times \mathbb{T}^2$}\label{L}

The following two family of symplectic building blocks will be used in our construction. They are obtained from $\Sigma_{n}\times \Sigma_{2}$ and $\Sigma_{n}\times \mathbb{T}^2$ by performing a sequence of Luttinger surgeries along the Lagrangian tori \cite{FPS, AP1, AP2}. The first family has $b_1 = 0$, and the second has $b_1 = 2$. One can relate these symplectic building blocks $X_{K}$, $V_{KK'}$ and $Y_{K}$, $W_{KK'}$ in \cite{A1, A2}, where $K$ and $K'$ are genus $1$ and genus $n-1$ fibered knots respectively. In order to see this connection, one should view the knot surgery manifold $M_{K} \times \mathbb{S}^1$ via the sequence of $2g$ Luttinger surgeries on $\Sigma_{g} \times \mathbb{T}^2$, where $K$ is a fibered genus $g$ knot in $\mathbb{S}^3$. This was carefully explained in \cite{ABP} for a trefoil knot. We also refer the reader \cite{Lu} (pages 225-226). 

To construct the first family of examples, we proceed as follows. Let us fix integers $n\geq 2$, $p_{i} \geq 0$ and $q_{i} \geq 0$ , where $1 \leq i \leq n$. Let $Y_{n}(1/p_{1},1/q_{1}, \cdots, 1/p_{n}, 1/q_{n})$ denote symplectic $4$-manifold obtained by performing the following $2n + 4$ Luttinger surgeries on $\Sigma_{n}\times \Sigma_{2}$. These $2n+4$ surgeries comprise of the following $8$ surgeries

\begin{eqnarray}\label{first 8 Luttinger surgeries}
&&(a_1' \times c_1', a_1', -1), \ \ \nonumber (b_1' \times c_1'', b_1', -1), \\ \nonumber
&&(a_2' \times c_2', a_2', -1), \ \ (b_2' \times c_2'', b_2', -1),\\ \nonumber
&&(a_2' \times c_1', c_1', +1/p_1), \ \ (a_2'' \times d_1', d_1', +1/q_1),\\ \nonumber
&&(a_1' \times c_2', c_2', +1/p_2), \ \ (a_1'' \times d_2', d_2', +1/q_2),
\end{eqnarray}
together with the following $2(n-2)$ additional Luttinger surgeries
\begin{gather*}
(b_1'\times c_3', c_3',  -1/p_3), \ \ 
(b_2'\times d_3', d_3', -1/q_3), \\  
\dots  , \ \ \dots, \\
(b_1'\times c_n', c_n',  -1/p_n), \ \
(b_2'\times d_n', d_n', -1/q_n).
\end{gather*}
Here, $a_i,b_i$ ($i=1,2$) and $c_j,d_j$ ($j=1,\dots,n$) denote the standard loops that generate $\pi_1(\Sigma_2)$ and $\pi_1(\Sigma_n)$, respectively. See Figure~\ref{fig:lagrangian-pair} for a typical Lagrangian tori along which the Luttinger surgeries are performed.  

\begin{figure}[ht]
\begin{center}
\includegraphics[scale=.49]{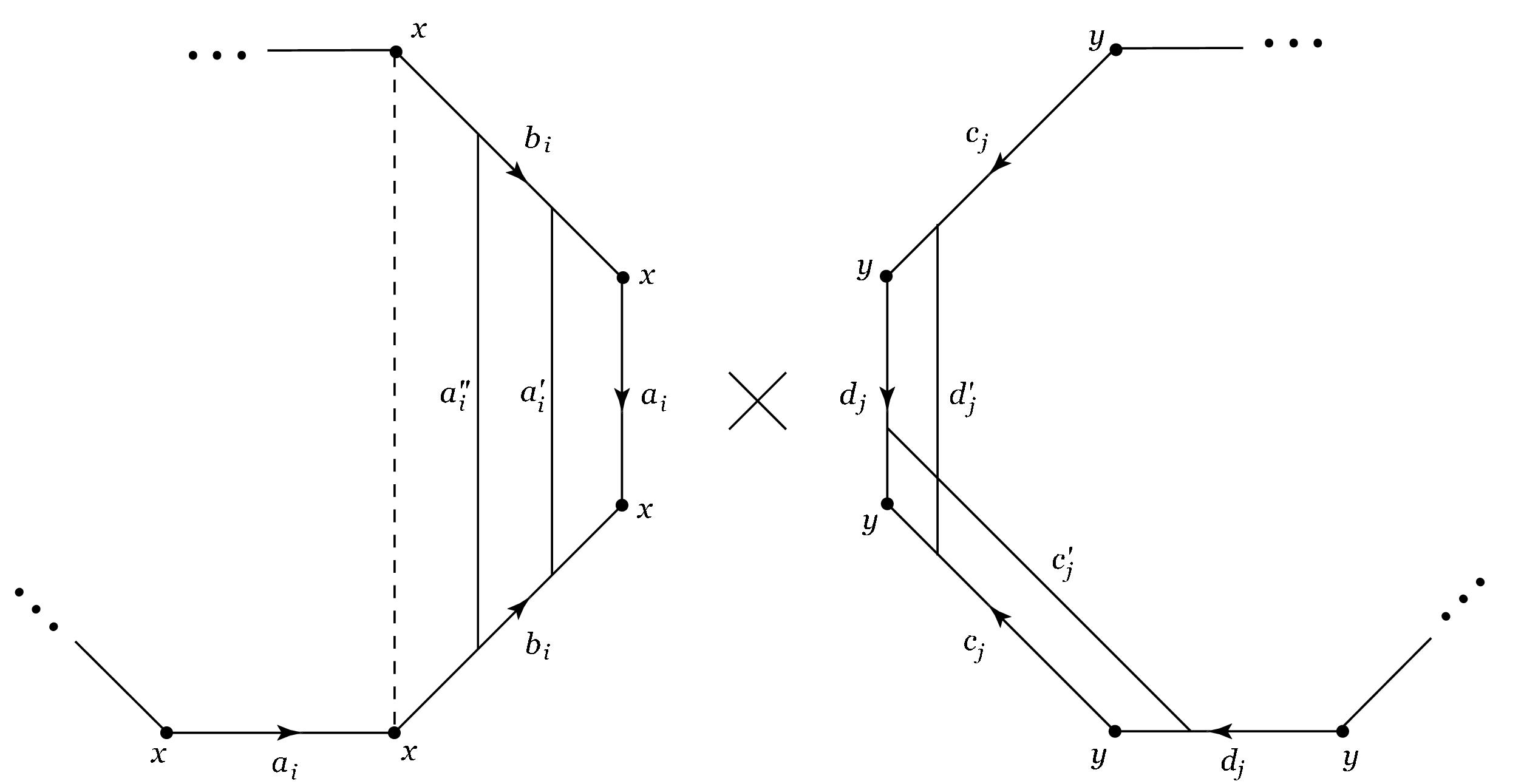}
\caption{Lagrangian tori $a_i'\times c_j'$ and $a_i''\times d_j'$}
\label{fig:lagrangian-pair}
\end{center}
\end{figure}

The Euler characteristic of $Y_{n}(1/p_{1},1/q_{1}, \cdots, 1/p_{n}, 1/q_{n})$ is $4n-4$ and its signature is $0$. The fundamental group $\pi_1(Y_{n}(1/p_{1},1/q_{1}, \cdots, 1/p_{n}, 1/q_{n}))$ is generated by $a_i,b_i,c_j,d_j$ ($i=1,2$ and $j=1,\dots,n$) and the following relations hold in $\pi_1(Y_{n}(1/p_{1},1/q_{1}, \cdots, 1/p_{n}, 1/q_{n}))$:  

\begin{gather}\label{Luttinger relations}
[b_1^{-1},d_1^{-1}]=a_1,\ \  [a_1^{-1},d_1]=b_1,\ \  [b_2^{-1},d_2^{-1}]=a_2,\ \  [a_2^{-1},d_2]=b_2,\\ \nonumber
[d_1^{-1},b_2^{-1}]=c_1^{p_1},\ \ [c_1^{-1},b_2]=d_1^{q_1},\ \ [d^{-1}_2,b^{-1}_1]=c_2^{p_2},\ \ [c_2^{-1},b_1]=d_2^{q_2},\\ \nonumber
 [a_1,c_1]=1, \ \ [a_1,c_2]=1,\ \  [a_1,d_2]=1,\ \ [b_1,c_1]=1,\\ \nonumber
[a_2,c_1]=1, \ \ [a_2,c_2]=1,\ \  [a_2,d_1]=1,\ \ [b_2,c_2]=1,\\ \nonumber
[a_1,b_1][a_2,b_2]=1,\ \ \prod_{j=1}^n[c_j,d_j]=1,\\ \nonumber
[a_1^{-1},d_3^{-1}]=c_3^{p_3}, \ \ [a_2^{-1} ,c_3^{-1}] =d_3^{q_3}, \  \dots, \ 
[a_1^{-1},d_n^{-1}]=c_n^{p_n}, \ \ [a_2^{-1} ,c_n^{-1}] =d_n^{q_n},\\ \nonumber
[b_1,c_3]=1,\ \  [b_2,d_3]=1,\ \dots, \
[b_1,c_n]=1,\ \ [b_2,d_n]=1.
\end{gather}

The surfaces $\Sigma_2\times\{{\rm pt}\}$ and $\{{\rm pt}\}\times \Sigma_n$ in $\Sigma_2\times\Sigma_n$ descend to surfaces in $Y_{n}(1/p_{1},1/q_{1}, \cdots, 1/p_{n}, 1/q_{n})$. 
They are symplectic submanifolds in \\
$Y_{n}(1/p_{1},1/q_{1}, \cdots, 1/p_{n}, 1/q_{n})$. We will denote their images by $\Sigma_2$ and $\Sigma_n$. Note that $[\Sigma_2]^2=[\Sigma_n]^2=0$ and $[\Sigma_2]\cdot[\Sigma_n]=1$.

Now, we consider a slightly different family. Let us fix a quadruple of integers $n \geq 2$, $m \geq 1$, $p \geq 1$ and $q \geq 1$. Let $Y_{n}(1/p,m/q)$ denote smooth $4$-manifold obtained by performing the following 
$2n$ torus surgeries on $\Sigma_n\times \mathbb{T}^2$:

\begin{eqnarray}\label{eq: Luttinger surgeries for Y_1(m)} 
&&(a_1' \times c', a_1', -1), \ \ (b_1' \times c'', b_1', -1),\\  \nonumber
&&(a_2' \times c', a_2', -1), \ \ (b_2' \times c'', b_2', -1),\\  \nonumber
&& \cdots, \ \ \cdots \\ \nonumber
&&(a_{n-1}' \times c', a_{n-1}', -1), \ \ (b_{n-1}' \times c'', b_{n-1}', -1),\\  \nonumber
&&(a_{n}' \times c', c', +1/p), \ \ (a_{n}'' \times d', d', +m/q).
\end{eqnarray}

Here, $a_i,b_i$ ($i=1,2, \cdots, m$) and $c,d$\/ denote the standard generators of $\pi_1(\Sigma_{2m})$ and $\pi_1(\mathbb{T}^2)$, respectively. Since all the torus surgeries above are Luttinger surgeries when $m = 1$, $Y_{n}(1/p,1/q)$ is a minimal symplectic 4-manifold. The fundamental group of $Y_{n}(1/p,m/q)$ is generated by $a_i,b_i$ ($i=1,2,3 \cdots, n$) and $c,d$, and the following relations hold in $\pi_1(Y_{n}(1/p,m/q))$:

\begin{gather}\label{Luttinger relations for Y_1(m)}
[b_1^{-1},d^{-1}]=a_1,\ \  [a_1^{-1},d]=b_1,\ \
[b_2^{-1},d^{-1}]=a_2,\ \  [a_2^{-1},d]=b_2,\\ \nonumber
\cdots,  \ \  \cdots,  \\ \nonumber
[b_{n-1}^{-1},d^{-1}]=a_{n-1},\ \  [a_{n-1}^{-1},d]=b_{n-1},\ \
[d^{-1},b_{n}^{-1}]=c^p,\ \ {[c^{-1},b_{n}]}^{-m}=d^q,\\ \nonumber
[a_1,c]=1,\ \  [b_1,c]=1,\ \ [a_2,c]=1,\ \  [b_2,c]=1,\\ \nonumber
[a_3,c]=1,\ \  [b_3,c]=1,\\ \nonumber
\cdots,  \ \  \cdots,  \\ \nonumber
[a_{n-1},c]=1,\ \  [b_{n-1},c]=1,\\ \nonumber
[a_{n},c]=1,\ \  [a_{n},d]=1,\\ \nonumber
[a_1,b_1][a_2,b_2] \cdots [a_n,b_n]=1,\ \ [c,d]=1.
\end{gather}

Let $\Sigma'_n \subset Y_{n}(1/p,l/q)$ be a genus $n$ surface that desend from the surface $\Sigma_{n}\times\{{\rm pt}\}$ in $\Sigma_{n}\times \mathbb{T}^2$.

\section{Construction of exotic $4$-manifolds}

In this section we prove the following theorem:

\begin{theorem}\label{thm:main}
Let $M$\/ be \/ $(2n+2k-3)\CP\#(6n+2k-3)\CPb$ for any $n \geq 1$ and $k \geq 1$. There exists a new family of smooth closed simply-connected minimal symplectic\/ $4$-manifold and an infinite family of non-symplectic $4$-manifolds that is homeomorphic but not diffeomorphic to\/ $M$.  
\end{theorem}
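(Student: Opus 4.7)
The plan is to generalize the construction of \cite{A2} by replacing Matsumoto's genus-two fibration with Gurtas' Lefschetz fibration $Y(n,k)=\Sigma_k\times \mathbb{S}^2\#4n\CPb$, whose regular fiber has genus $g=2k+n-1$. I would form the symplectic connected sum
\[
X(n,k) \;=\; Y(n,k) \,\#_{\Sigma_g}\, Z(n,k),
\]
where $Z(n,k)$ is a Luttinger-surgered building block from Subsection~\ref{L}, for instance $Y_g(1/p,1/q)$ built from $\Sigma_g\times\mathbb{T}^2$ (so that it contains a symplectic genus $g$ surface of self-intersection zero), and the gluing is performed along a regular fiber of the Lefschetz fibration and the image of $\Sigma_g\times\{\pt\}$ in $Z(n,k)$. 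By Gompf's theorem \cite{G}, $X(n,k)$ is symplectic. Using $e(Y(n,k))=4n-4k+4$ and $\sigma(Y(n,k))=-4n$ (from the identification $Y(n,k)\cong\Sigma_k\times \mathbb{S}^2\#4n\CPb$) and $e(Z(n,k))=\sigma(Z(n,k))=0$ (Luttinger surgery preserves both, and both vanish on $\Sigma_g\times\mathbb{T}^2$), the connected-sum formulas of Section~3 give $e(X(n,k))=8n+4k-4$ and $\sigma(X(n,k))=-4n$, matching the invariants of $M=(2n+2k-3)\CP\#(6n+2k-3)\CPb$.

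For the fundamental group I would apply van Kampen to the decomposition of $X(n,k)$. Lemma~\ref{sixl} produces $4n$ disjoint $(-1)$-sphere sections for the genus $g$ fibration on $Y(n,k)$, so the meridian of $\Sigma_g$ in $Y(n,k)\setminus\nu\Sigma_g$ is nullhomotopic; Theorem~\ref{mainthm} together with the relations of Lemma~\ref{sixl} (namely $e_j=1$ and $a_i a_{2k+1-i}=1$) then make $\pi_1(Y(n,k)\setminus\nu\Sigma_g)$ a quotient of $\pi_1(\Sigma_k)$. On the other side, the presentation of $\pi_1(Z(n,k))$ from Subsection~\ref{L} contains Luttinger commutator relations such as $[b_i^{-1},d^{-1}]=a_i$ and $[a_i^{-1},d]=b_i$. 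I would identify the two embedded copies of $\pi_1(\Sigma_g)$ under the gluing diffeomorphism, combine all relations, and verify that each surviving generator of the amalgam is forced to be a commutator of trivial elements, yielding $\pi_1(X(n,k))=1$. Freedman's theorem then provides the homeomorphism with $M$.

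To conclude exoticness, first apply Usher's theorem (case (iii)) for minimality: $Y_g(1/p,1/q)$ is a minimal symplectic manifold, and the $(-1)$-sphere sections of $Y(n,k)$ all meet $\Sigma_g$ so they disappear in the complement. By Taubes, $X(n,k)$ has nontrivial Seiberg--Witten invariants (being minimal symplectic with $b_2^+\geq 2$), whereas $M$ has vanishing SW invariants by the standard connected-sum vanishing theorem, so $X(n,k)$ is not diffeomorphic to $M$. To produce the advertised infinite family of non-symplectic exotic structures on $M$, I would replace the surgery coefficient $1/q$ by $m/q$ (for varying $m\geq 2$) in the construction of $Z(n,k)$; $e$, $\sigma$, and $\pi_1$ are preserved by the lemma in Section~3, while the Seiberg--Witten invariants vary with $m$ via the standard torus-surgery formula, producing infinitely many pairwise non-diffeomorphic smooth structures, all but one of which are non-symplectic since the underlying torus surgery is no longer Luttinger.

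The main obstacle will be the fundamental group computation. The Gurtas vanishing cycles yield relations substantially more intricate than those in the genus-two setting of \cite{A2}, and since $g=2k+n-1$ grows with both parameters the number of generators to be killed is large; one must carefully track the gluing identification of the $\pi_1(\Sigma_g)$-generators on the two sides and then apply the Luttinger commutator relations in the right order to collapse the entire group to the trivial group, without leaving residual generators behind.
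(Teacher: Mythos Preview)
Your proposal is correct and follows essentially the same route as the paper: the paper also takes the symplectic fiber sum of $Y(n,k)$ with the Luttinger-surgered block $Y_g(1,1)$ coming from $\Sigma_g\times\mathbb{T}^2$, computes $e$, $\sigma$, and $\pi_1$ exactly as you outline (killing the meridian via the $(-1)$-sections, using the Gurtas relations $e_j=1$ and $a_i a_{2k+1-i}=1$, and then collapsing via the commutator relations $[b_i^{-1},d^{-1}]=a_i$, $[a_i^{-1},d]=b_i$), and finishes with Freedman, Taubes, Usher, and the $m/q$ torus-surgery variation for the infinite non-symplectic family. The one point you should sharpen for the Usher step is that \emph{every} symplectic $(-1)$-sphere in $Y(n,k)$ (not just the $4n$ sections) meets the fiber $\Sigma_g$; the paper handles this by observing that any such sphere is homologous to $r\,\mathbb{S}^2\pm E_i$.
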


We will accomplish the proof of this theorem in several steps. First, we provide the construction which uses a combination of the Luttinger surgery \cite{Lu} and symplectic fiber sum operations \cite{G}. Since the order of these operations do not influence the outcome, our construction can be obtained by performing Luttinger surgeries on Lefschetz fibrations over higher genus surfaces. Next, we show that the fundamental groups are trivial, and determine the homeomorphism types of our examples. In the final step, we study the diffeomorphism types of our manifolds and distinguish them from $(2n+2k-3)\CP\#(6n+2k-3)\CPb$ by computing their Seiberg-Witten invariants and applying Theorem \cite{U}.  

Our first building block will be the symplectic $4$-manifold $Y(n,k) = \Sigma_{k} \times \mathbb{S}^2\#4n\CPb$ with a genus $2k+n-1$ symplectic submanifold $\Sigma_{2k+n-1}\subset Y(n,k)$, a regular fiber of the Lefschetz fibration which we discussed Section~\ref{sbb}. Here we endowed $Y(n,k) = \Sigma_{k} \times \mathbb{S}^2\#4n\CPb$ with the symplectic structure induced from the given Lefschetz fibration. The other building block will be the symplectic $4$-manifold $Y_{g}(1,1)$ along the symplectic submanifold $\Sigma'_g$ (see Section~\ref{sbb}), where we set $g = 2k + n -1$ and $p = q = m = 1$. We denote by $X(n,k)$ the symplectic $4$-manifold obtained by forming the symplectic fiber sum of $Y(n,k)$ and $Y_{g}(1,1)$\/ along the surfaces $\Sigma_{2k+n-1}$ and $\Sigma'_{g}$. 

Recall from Section~\ref{sbb} that loops $a_{1}$, $b_{1}$, $\cdots$, $a_{2k}$, and $b_{2k}$ generate the inclusion-induced image of $\pi_1(\Sigma_{2k+n-1}\times S^{1})$ inside $\pi_1(Y(n,k)\setminus\nu\Sigma_{2k+n-1})$. This is due to the fact that the loops $e_{1}$, $e_{2}$, $\cdots$, $e_{2n-2}$, $e_{2n-1}$ and the normal circle to $\mu=\{{\rm pt}\}\times S^1$ to $\Sigma_{2k+n-1}$ are all nullhomotopic in $(Y(n,k)\setminus\nu\Sigma_{2k+n-1})$. As before, let $a_{1}'$, $b_{1}'$, $\cdots$ $a_{g}'$, $b_{g}'$ and $\mu' = [c,d]$ generate $\pi_1(\Sigma'_{g}\times S^{1})$ in $\pi_1(Y_{g}(1,1)\setminus\nu\Sigma'_{g})$. %As before, let $c_{1}$, $d_{1}$, $\cdots$ $c_{m}$, $d_{m}$ and $\mu' = [a_1,b_1][a_2,b_2]$ generate $\pi_1(\Sigma'_{m}\times S^{1})$. 

We choose the gluing diffeomorphism  $\psi : \Sigma_{2g+n-1}\times S^1 \rightarrow \Sigma'_{g}\times S^1$ that maps the generators of the fundamental groups as follows: 

\begin{align*}
\psi_{\ast} (a_{1}) = a_{1}',\; 
\psi_{\ast} (b_{1}) = b_{1}',\; 
\psi_{\ast} (a_{2}) = a_{2}',\; 
\psi_{\ast} (b_{2}) = b_{2}',\; 
\cdots, 
\psi_{\ast} (a_{2k}) = a_{2k}',\;
\psi_{\ast} (b_{2k}) = b_{2k}',\;
\\
\psi_{\ast} (e_{1}) = a_{2k+1}',\;
\psi_{\ast} (e_{2}) = b_{2k+1}',\;
\cdots, 
\psi_{\ast} (e_{2n-3}) = a_{2k+n-1}',\;
\psi_{\ast} (e_{2n-2}) = b_{2k+n-1}',\;
\psi_{\ast} (\mu) = \mu'.
\end{align*} 

By Gompf's theorem in \cite{G}, $X(n,k)$ is symplectic. 

\begin{lemma}\label{lemma:pi_1(X)=1} 
$X(n,k)$ is simply-connected. 
\end{lemma}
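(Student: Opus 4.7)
The plan is to apply Seifert--van Kampen to the symplectic fiber sum decomposition $X(n,k) = (Y(n,k)\setminus \nu\Sigma_{2k+n-1}) \cup_{\psi} (Y_g(1,1)\setminus \nu\Sigma'_g)$, where $g = 2k+n-1$. This gives a presentation of $\pi_1(X(n,k))$ with generators coming from both complements, with the images of the boundary tori $\Sigma_{2k+n-1}\times S^1$ and $\Sigma'_g\times S^1$ identified under the prescribed gluing $\psi_*$. My goal will be to show every generator is trivial after imposing all these relations.

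First, I would record the generators and the easy vanishings on the $Y(n,k)$ side. By Lemma~\ref{lemma:pi1}, $\pi_1(Y(n,k))$ is a quotient of $\pi_1(\Sigma_{2k+n-1})$, so together with the meridian $\mu$ the complement fundamental group is generated by $a_1,b_1,\ldots,a_{2k},b_{2k},e_1,\ldots,e_{2n-2},\mu$. From Lemma~\ref{sixl} (for $Y(n,k)$) we have $e_1 = e_2 = \cdots = e_{2n-1} = 1$, and $\mu = 1$ because $\Sigma_{2k+n-1}$ meets an exceptional sphere section transversely in one point (guaranteed by the $(-1)$-sphere sections constructed in the previous Lemma). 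Under the gluing, the identifications $\psi_*(e_{2j-1}) = a'_{2k+j}$ and $\psi_*(e_{2j}) = b'_{2k+j}$ for $j = 1,\ldots,n-1$ therefore force
\[
a'_{2k+1} = b'_{2k+1} = a'_{2k+2} = b'_{2k+2} = \cdots = a'_g = b'_g = 1
\]
in $\pi_1(X(n,k))$.

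The main step is then to propagate this triviality through the Luttinger relations of $Y_g(1,1)$ (with $p=q=m=1$). The surgery relations indexed by $i = g$ read
\[
[d^{-1},b_g^{-1}] = c, \qquad [c^{-1},b_g]^{-1} = d,
\]
and since $b'_g = 1$ these immediately yield $c = d = 1$. Substituting $d = 1$ into the remaining Luttinger relations $[b_i^{-1},d^{-1}] = a_i$ and $[a_i^{-1},d] = b_i$ for $i = 1,\ldots,g-1$ kills $a'_i$ and $b'_i$ for every such $i$. Combined with $\psi_*(a_i) = a'_i$ and $\psi_*(b_i) = b'_i$ for $i \leq 2k$, this also trivializes the remaining generators from the $Y(n,k)$ side, so every generator of $\pi_1(X(n,k))$ dies and the group is trivial.

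The main obstacle I anticipate is purely bookkeeping: one must make sure that (i) the list of generators of each complement's fundamental group is correctly identified with the standard basis of $\pi_1(\Sigma_{2k+n-1})$ under $\psi_*$, including the off-diagonal identifications $\psi_*(e_{2j-1}) = a'_{2k+j}$, and (ii) the meridian $\mu$ really is nullhomotopic in $Y(n,k)\setminus\nu\Sigma_{2k+n-1}$, which requires invoking the $(-1)$-sphere section from the Lefschetz fibration on $Y(n,k)$ that meets the fiber once. Once these two points are verified, the remaining manipulations of the Luttinger relations are straightforward and proceed in the order outlined above.
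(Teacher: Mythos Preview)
Your argument is clean and correct for $n\geq 2$, but it breaks down when $n=1$, which is included in the statement. The point is that when $n=1$ the regular fiber has genus $g=2k+n-1=2k$, so there are no ``extra'' generators $a'_{2k+1},\ldots,a'_g,b'_g$ at all: the gluing identifies $a_i\leftrightarrow a'_i$, $b_i\leftrightarrow b'_i$ for $i=1,\ldots,2k$ and nothing more (your range $j=1,\ldots,n-1$ is empty). In particular $b'_g=b'_{2k}$ is glued to $b_{2k}$, which is a genuine generator of $\pi_1(Y(1,k)\setminus\nu\Sigma_{2k})\cong \pi_1(\Sigma_k)$ and is not known to be trivial at this stage. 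Hence your key step ``since $b'_g=1$ we get $c=d=1$'' fails for $n=1$, and the rest of your cascade never starts.

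The paper's argument supplies exactly the missing ingredient in that case. It uses the relations $a_i a_{2k+1-i}=1$ coming from the vanishing cycles of the Korkmaz fibration (Lemma~\ref{sixl}) to rewrite $a_1^{-1}=a_{2k}$, and combines this with the Luttinger relation $[a'_g,d]=1$ --- which for $n=1$ is precisely $[a_{2k},d]=1$ --- to turn $[a_1^{-1},d]=b_1$ into $b_1=1$. This is the non-obvious step you are missing: one has to feed the vanishing-cycle relations from the $Y(n,k)$ side back into the Luttinger relations from the $Y_g(1,1)$ side. For $n\geq 2$ your shortcut via $e_{2n-2}=1$ is valid and in fact simpler than what the paper writes; to cover all $n\geq 1$ you should either add the paper's argument for $n=1$ as a separate case, or reorganize the proof to use the $a_ia_{2k+1-i}=1$ relations from the outset.
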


\begin{proof}
By Van Kampen's theorem, we have 

\begin{equation*}
\pi_{1}(X(n,k)) \:=\: \frac{\pi_{1}(Y(n,k)\setminus \nu \Sigma_{2k +n -1})\ast \pi_{1}(Y_{g}(1,1) \setminus \nu \Sigma'_{g})}{\langle a_{1} = a_{1}',\, b_{1} = b_{1}',\, \cdots,\, a_{2k} = a_{2k}',\, b_{2k} = b_{2k}',\, 
\cdots,\, e_{2n-2} = b_{2k+n-1}',\, \mu = \mu' = 1 \rangle}.
\end{equation*}

\noindent Since the loops $e_{1}$, $e_{2}$, $\cdots$, $e_{2n-3}$, $e_{2n-2}$ and the normal circle to $\mu=\{{\rm pt}\}\times S^1$ are all nullhomotopic in $Y(n,k)\setminus\nu\Sigma_{2k+n-1}$, we get the following presentation for the fundamental group of $X(n,k)$. 

\begin{eqnarray}
\pi_1(X(n,k))  \nonumber &=& \langle 
a_{1}, b_{1}, \cdots, a_{2k}, b_{2k};\, c, d ;\, \mid \label{eq:pi_1(X(n,k))} \\ \nonumber
&&[b_1^{-1},d^{-1}]=a_1,\ \  [a_1^{-1},d]=b_1, \\ \nonumber
&&\cdots,  \ \ \cdots,  \\ \nonumber
&&[b_{2k}^{-1},d^{-1}]=a_{2k},\ \  [a_{2k}^{-1},d]=b_{2k},\ \
[d^{-1},b_{2k}^{-1}]=c,\ \ [c^{-1},b_{2k}]=d,\\ \nonumber
&&[a_1,c]=1,\ \  [b_1,c]=1,\ \ [a_2,c]=1,\ \  [b_2,c]=1,\\ \nonumber
&&[a_3,c]=1,\ \  [b_3,c]=1,\\ \nonumber
&&\cdots,  \ \ \cdots,  \\ \nonumber
&&[a_{2k-1},c]=1,\ \  [b_{2k-1},c]=1,\\ \nonumber
&&[a_{2k},c]=1,\ \  [a_{2k},d]=1,\\ \nonumber
&&[a_1,b_1][a_2,b_2] \cdots [a_{2k},b_{2k}]=1,\ \ [c,d]=1, \\ \nonumber
&&a_{1}a_{2k} = 1,\ \  a_2a_{2k-1}=1,\ \  \cdots ,\ \  a_{k}a_{k+1}=1, \\ \nonumber
&&b_{1}b_{2}\cdots b_{2k} = 1,\ \  b_{2}b_{3}\cdots b_{2k-1}= [a_{2k},b_{2k}], \\ \nonumber
&&\cdots , \cdots, \cdots \\ \nonumber
&&b_{i+1}b_{i+2} \cdots b_{2k-i} = [a_{2k-i+1},b_{2k-i+1}] \cdots [a_{2k-1},b_{2k-1}][a_{2k},b_{2k}], \\ \nonumber 
&&[a_1,b_1][a_2,b_2] \cdots [a_{k},b_{k}]=1  \rangle.
\end{eqnarray}

To prove $\pi_1(X(n,k))=1$, it is enough to prove that $b_{1} = 1$, which in turn will imply that all other generators are trivial. Using the last set of identities, we have ${a_{1}}^{-1}= a_{2k}$, $\cdots$ , ${a_{k}}^{-1} = a_{k+1}$. Now, we rewrite the relation $[a_{1}^{-1},d]=b_{1}$ as\/ $[a_{2k},d] = b_{1}$. Since $[a_{2k},d] = 1$, we obtain $b_{1} = 1$, which in turn imply that $a_{1} = a_{2k} = b_{2k} = c = d = 1$ using the relations $[b_1^{-1},d^{-1}]=a_1$, $a_{2k} = {a_{1}}^{-1}$, $b_{1}b_{2}\cdots b_{2k} = 1$, $b_{2}b_{3}\cdots b_{2k-1}= [a_{2k},b_{2k}]$, $[d^{-1},b_{2k}^{-1}]=c$ and $[c^{-1},b_{2k}]=d$. Since \/ $[b_{i}^{-1},d^{-1}]=a_{i}$ and\/ $[a_{i}^{-1},d]=b_{i}$, for any $1 \leq i \leq 2k-1$, we have $a_{2} = \cdots = a_{2k-1} = b_{2} = \cdots = b_{2k-1} = 1$. Thus, $\pi_1(X(n,k))$ is trivial.

\end{proof}

\begin{lemma} 
$e(X(n,k))= 8n + 4k - 4$, $\sigma (X(n,k)) = - 4n$, $c_1^{2}(X(n,k)) = 4n + 8k - 8$, and\/ $\chi_h(X(n,k)) = n + k - 1$.
\end{lemma}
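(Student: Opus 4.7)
The plan is to apply the standard additivity formulas to the decomposition $X(n,k) = Y(n,k) \#_{\Psi} Y_{g}(1,1)$ along genus-$g$ surfaces with $g = 2k+n-1$, exactly as in the symplectic connected sum lemma of Section~3. That lemma records $e(X_{1}\#_{\Psi}X_{2}) = e(X_{1}) + e(X_{2}) - 2e(\Sigma)$ and $\sigma(X_{1}\#_{\Psi}X_{2}) = \sigma(X_{1}) + \sigma(X_{2})$, and the remaining two assertions will follow automatically from the identities $\chi_{h} = (\sigma+e)/4$ and $c_{1}^{2} = 3\sigma + 2e$.

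The first step I would carry out is to tabulate $e$ and $\sigma$ for each summand. Since $Y(n,k) = \Sigma_{k}\times \mathbb{S}^{2}\#4n\CPb$ by the branched-cover description of Section~\ref{sbb}, the product and blow-up formulas give $e(Y(n,k)) = 2(2-2k) + 4n = 4n - 4k + 4$ and $\sigma(Y(n,k)) = -4n$. For $Y_{g}(1,1)$, the key point is that it is produced from $\Sigma_{g}\times \mathbb{T}^{2}$ by a sequence of Luttinger surgeries, and the second item of the Luttinger-surgery lemma in Section~\ref{subsec:Luttinger} asserts that both $e$ and $\sigma$ are preserved under such surgeries; hence $e(Y_{g}(1,1)) = e(\Sigma_{g}\times \mathbb{T}^{2}) = 0$ and $\sigma(Y_{g}(1,1)) = 0$.

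The second step is pure arithmetic. With $e(\Sigma_{g}) = 2 - 2(2k+n-1) = 4 - 4k - 2n$, the additivity formulas yield
$$e(X(n,k)) = (4n - 4k + 4) + 0 - 2(4 - 4k - 2n) = 8n + 4k - 4,$$
and $\sigma(X(n,k)) = -4n + 0 = -4n$. Substituting these into $\chi_{h} = (\sigma+e)/4$ and $c_{1}^{2} = 3\sigma + 2e$ gives $\chi_{h}(X(n,k)) = (4n + 4k - 4)/4 = n+k-1$ and $c_{1}^{2}(X(n,k)) = -12n + 2(8n + 4k - 4) = 4n + 8k - 8$, which are precisely the claimed values.

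There is no genuine obstacle; the entire statement is a bookkeeping consequence of the additivity formulas of Section~3. The only point worth flagging is the invariance of $e$ and $\sigma$ under Luttinger surgery, which is what lets us read off the invariants of $Y_{g}(1,1)$ directly from those of $\Sigma_{g}\times \mathbb{T}^{2}$ without individually tracking the $2n$ surgeries listed in Section~\ref{L}.
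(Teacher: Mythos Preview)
Your proof is correct and follows essentially the same route as the paper: both apply the additivity formulas for $e$ and $\sigma$ under symplectic fiber sum along a genus $g=2k+n-1$ surface, use the invariance of $e$ and $\sigma$ under Luttinger surgery to get $e(Y_g(1,1))=\sigma(Y_g(1,1))=0$, and read off $c_1^2$ and $\chi_h$ from $e$ and $\sigma$. The only cosmetic difference is that the paper also quotes the derived additivity formulas for $c_1^2$ and $\chi_h$ directly, whereas you compute $e$ and $\sigma$ first and substitute at the end; the arithmetic is identical.
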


\begin{proof} 
Since \/ $e(X(n,k))=e(Y(n,k))+e(Y_{g}(1,1))+4(2k+n-2)$, 
$\sigma (X(n,k)) = \sigma(Y(n,k)) + \sigma(Y_{g}(1,1))$, 
$c_1^{2}(X(n,k)) = c_1^{2}(Y(n,k)) + c_1^{2}(Y_{g}(1,1)) + 8(2k+n-2)$, and 
$\chi_{h}(X(n,k)) = \chi_{h}(Y(n,k)) + \chi_{h}(Y_{g}(1,1)) + (2k+n-2)$.  
We easily compute that $e(Y_{g}(1,1))=0$, 
$\sigma(Y_{g}(1,1))=0$, 
 $c_1^{2}(Y_{g}(1,1)=0$, 
and $\chi_{h}(Y_{g}(1,1))=0$.
Since $e(Y(n,k))=4 - 4k + 4n$, 
$\sigma(Y(n,k)) = -4n$, 
$c_1^{2}(Y(n,k)) = 8 - 8k - 4n$ 
and $\chi_{h} (Y(n,k)) = 1 - k$, which proves the lemma. 
\end{proof}

Using the Freedman's classification theorem for simply-connected 4-manifolds (cf.\ \cite{F}), we conclude that $X(n,k)$\/ is homeomorphic to $(2n+2k-3)\CP\#(6n+2k-3)\CPb$. Since $X(n,k)$ is symplectic, by Taubes's theorem (cf.\ \cite{Ta}) ${\rm SW}_{X(n,k)}( K_{X(n,k)} ) = \pm 1$, where $K_{X(n,k)}$ is the canonical class of $X(n,k)$. Next, by we applying the connected sum theorem (cf.\ \cite{W}) for the Seiberg-Witten invariant, we deduce that the Seiberg-Witten invariant of $(2n+2k-3)\CP\#(6n+2k-3)\CPb$ is trivial. Since the Seiberg-Witten invariant is a diffeomorphism invariant, we conclude that $X(n,k)$\/ is not diffeomorphic to $(2n+2k-3)\CP\#(6n+2k-3)\CPb$. 

All $4n$ exceptional spheres $E_{1}$, $E_{2}$, $\cdots$, $E_{4n-1}$, and $E_{4n}$, which are the sections of the genus $2k + n - 1$ fibration on $Y(n,k) = \Sigma_{k} \times \mathbb{S}^2\#4n\CPb$, meet with the fiber $\Sigma = 2n(\Sigma_{k}\times \{{\rm pt}\}) + \{{\rm pt}\} \times \mathbb{S}^{2} - E_{1} - E_{2} - \cdots - E_{4n-1} - E_{4n}$. Furthemore, any embedded symplectic $-1$ sphere in $Y(n,k)$\/ has the form $r\mathbb{S}^{2} \pm E_{i}$, thus intersect non-trivially with the fiber $\Sigma = 2n(\Sigma_{k}\times \{{\rm pt}\}) + \{{\rm pt}\} \times \mathbb{S}^{2} - E_{1} - E_{2} - \cdots - E_{4n-1} - E_{4n}$. Using the Usher's Theorem, the symplectic sum manifold $X(n,k)$\/ is a minimal symplectic $4$-manifold. Since symplectic minimality implies smooth minimality (cf.\ \cite{Li}), $X(n,k)$\/ is also smoothly minimal.

In order to produce an infinite family of exotic $(2n+2k-3)\CP\#(6n+2k-3)\CPb$'s, we replace the building block $Y_{g}(1,1)$\/ with $Y_{g}(1,m)$, where $|m| > 1$. Let us denote the resulting smooth manifolds as $X(n,k,m)$\/. In the presentation of the fundamental group, this amounts replacing a single relation $[c^{-1},b_{n}] =d$ with ${[c^{-1},b_{n}]}^{-m}=d$. Notice that chainging this relation does not affect our calculation of $\pi_{1}(X(n,k)) = 1$. Hence all the fundamental group calculations follow the same lines and result in the trivial group.

Moreover, using the argument similar as in \cite{ABP} (Section 4, pages 12-18), we see that $X(n,k)$ has one basic class up to sign, the canonical class $\pm K_{X(n,k)}$. Let $X(n,k)_{0}$ denote the symplectic $4$-manifold obtained by performing the following Luttinger surgery on: $(a_{n}'' \times d', d', 0/1)$. It is easy to see that $\pi_{1}(X(n,k)_{0}) = \mathbb{Z}$ and the canonical class $K_{X(n,k)_{0})} = 2[\Sigma_l]+ \sum_{j=1}^{4k}[R_j]$, where $R_j$ are tori of self-intersection $-1$. Furthermore, the basic class $\beta_{n,k,m}$ of $X(n,k,m)$ corresponding to the canonical class $K_{X(n,k)_{0}}$ satisfies $SW_{X(n,k,m)}(\beta_{n,k,m}) = SW_{X_{n,k}}(K_{X(n,k)}) + (m-1)SW_{X(n,k)_{0}} (K_{X(n,k)_{0}}) = 1 + (m-1) = m$. Thus, every $X(n,k,m)$ with $m \geq 2$ is nonsymplectic. 

\section{Construction of symplectic $4$-manifolds with various fundamental groups}

In this section we modify the above construction to obtain symplectic $4$-manifolds with the various finitely generated fundamental groups, such as the free groups of rank $s \geq 1$, and the finite free products of cyclic groups, using the \emph{Luttinger surgery}. Our construction can be generalized further to obtain symplectic $4$-manifolds with arbitrarily finitely presented fundamental groups and with \emph{small size}, but we will not pursue this here. We refer the reader to the articles in \cite{AZ} and \cite{AO} where this problem has been thoroughly studied using the \emph{Luttinger surgery}.    

Our first building block will be the symplectic $4$-manifold $Y(n,k) = \Sigma_{k} \times \mathbb{S}^2\#4n\CPb$ along with a regular fiber of the genus $2k+n-1$ Lefschetz fibration on $Y(n,k)$ (see Section~\ref{sbb}). We equip $Y(n,k) = \Sigma_{k} \times \mathbb{S}^2\#4n\CPb$ with the symplectic structure induced from the given Lefschetz fibration. The second building block will be the symplectic $4$-manifold $Y_{l}(1/p_{1},1/q_{1}, \cdots, 1/p_{l}, 1/q_{l})$ discussed in the Section \ref{L}, along with the symplectic submanifold $\Sigma_l$ (see Section~\ref{sbb}), where we set $l = 2k + n - 1$. To simplify the notation, we set $\overline{p}=(p_1, \ldots, p_l)$ and $\overline{q}= (q_1, \ldots, q_l)$ throughout this section. Let $X(n,k,\overline{p}, \overline{q})$ denote the symplectic $4$-manifold obtained by forming the symplectic fiber sum of $Y(n,k)$ and $Y_{l}(1/p_{1},1/q_{1}, \cdots, 1/p_{l}, 1/q_{l})$\/ along the surfaces $\Sigma_{2k+n-1}$ and $\Sigma_{l}$. Let $c_{1}$, $d_{1}$, $\cdots$, $c_{l}$, $d_{l}$, and $\mu' = [a_{1},b_{1}][a_{2},b_{2}]$ generate $\pi_1(\Sigma_{l}\times S^{1})$ in $Y_{l}(1/p_{1},1/q_{1}, \cdots, 1/p_{l}, 1/q_{l})\setminus\nu\Sigma_{l}$ (see Section \ref{sbb}). %As before, let $c_{1}$, $d_{1}$, $\cdots$ $c_{m}$, $d_{m}$ and $\mu' = [a_1,b_1][a_2,b_2]$ generate $\pi_1(\Sigma'_{l}\times S^{1})$. 

We choose the gluing diffeomorphism  $\psi : \Sigma_{2g+n-1}\times S^1 \rightarrow \Sigma_{l}\times S^1$ that maps the generators of the fundamental groups as follows: 
\begin{align*}
\psi_{\ast} (\alpha_{1}) = c_{1},\; 
\psi_{\ast} (\beta_{1}) = d_{1},\; 
\psi_{\ast} (\alpha_{2}) = c_{2},\; 
\psi_{\ast} (\beta_{2}) = d_{2},\; 
\cdots, 
\psi_{\ast} (\alpha_{2k}) = c_{2k},\;
\psi_{\ast} (\beta_{2k}) = d_{2k},\;
\\
\psi_{\ast} (e_{1}) = c_{2k+1},\;
\psi_{\ast} (e_{2}) = d_{2k+1},\;
\cdots, 
\psi_{\ast} (e_{2n-3}) = c_{2k+n-1},\;
\psi_{\ast} (e_{2n-2}) = d_{2k+n-1},\;
\psi_{\ast} (\mu) = \mu'
\end{align*} 
By Gompf's theorem in \cite{G}, $X(n,k,\overline{p},\overline{q})$ is symplectic. By Van Kampen's theorem, we have 

\begin{equation*}
\pi_{1}(X(n,k,\overline{p}, \overline{q})) \:=\: \frac{\pi_{1}(Y(n,k)\setminus \nu \Sigma_{2k+n-1})\ast \pi_{1}(Y_{l}(1/p_{1},1/q_{1}, \cdots, 1/p_{l}, 1/q_{l}) \setminus \nu \Sigma_{l})}{\langle \alpha_{1} = c_{1},\, \beta_{1} = d_{1},\, \cdots,\, \alpha_{2k} = c_{2k},\, \beta_{2k} = d_{2k},\, 
\cdots,\, e_{2n-2} = d_{2k+n-1},\, \mu = \mu' \rangle}.
\end{equation*}

Since the loops $e_{1}$, $e_{2}$, $\cdots$, $e_{2n-3}$, $e_{2n-2}$ and the normal circle to $\mu=\{{\rm pt}\}\times S^1$ are all nullhomotopic in $Y(n,k)\setminus\nu\Sigma_{2k+n-1}$, we get the following presentation for the fundamental group of $X(n,k, \overline{p}, \overline{q})$.

\begin{eqnarray}
\pi_1(X(n,k,\overline{p}, \overline{q}))  \nonumber &=& \langle 
c_{1}, d_{1}, \cdots, c_{2k}, d_{2k};\, a_{1}, b_{1}, a_{2}, b_{2} ;\, \mid \label{eq:pi_1(X(n,k))} \\ \nonumber
&&[b_1^{-1},d_1^{-1}]=a_1,\ \  [a_1^{-1},d_1]=b_1,\ \  [b_2^{-1},d_2^{-1}]=a_2,\ \  [a_2^{-1},d_2]=b_2,\\ \nonumber
&&[d_1^{-1},b_2^{-1}]=c_1^{p_1},\ \ [c_1^{-1},b_2]=d_1^{q_1},\ \ [d_2^{-1},b_1^{-1}]=c_2^{p_2},\ \ [c_2^{-1},b_1]=d_2^{q_2},\\ \nonumber
&&[a_1,c_1]=1, \ \ [a_1,c_2]=1,\ \  [a_1,d_2]=1,\ \ [b_1,c_1]=1, \\ \nonumber
&&[a_2,c_1]=1, \ \ [a_2,c_2]=1,\ \  [a_2,d_1]=1,\ \ [b_2,c_2]=1, \\ \nonumber
&&[a_1,b_1][a_2,b_2]=1,\ \ \prod_{j=1}^{2k}[c_j,d_j]=1, \\ \nonumber
&&[a_1^{-1},d_3^{-1}]=c_3^{p_3}, \ \ [a_2^{-1} ,c_3^{-1}] =d_3^{q_3}, \\ \nonumber
&& \cdots, \ \, \cdots, \\ \nonumber
&&[a_1^{-1},d_{2k}^{-1}]=c_{2k}^{p_l}, \ \ [a_2^{-1} ,c_{2k}^{-1}] =d_{2k}^{q_l},\\ \nonumber
&&[b_1,c_3]=1,\ \  [b_2,d_3]=1,\ \ \cdots, [b_1,c_{2k}]=1,\ \ [b_2,d_{2k}]=1, \\ \nonumber
&&c_{1}c_{2k} = 1,\ \  c_2c_{2k-1}=1,\ \  \cdots ,\ \  c_{k}c_{k+1}=1, \\ \nonumber
&&d_{1}d_{2}\cdots d_{2k} = 1,\ \  d_2d_{3}\cdots d_{2k-1}= [c_{2k},d_{2k}],\ \  \cdots , \\ \nonumber
&&d_{i+1}d_{i+2} \cdots d_{2k-i} = [c_{2k-i+1},d_{2k-i+1}] \cdots [c_{2k-1},d_{2k-1}][c_{2k},d_{2k}] \rangle. \\ \nonumber
\end{eqnarray}

To realize the free group of rank $s \geq 1$ as the fundamental groups, we simply set  $p_3 = \cdots = p_{2k} = 0$, $p_1 = p_2 = q_1 = \cdots = q_{2k} = 1$ in the above presentation. Using the identity ${c_{2k}}^{-1}=c_{1}$, we rewrite the relation $[a_{1}^{-1}, c_{2k}^{-1}]=d_{2k}$ as\/ $[a_{1}^{-1},c_{1}] = d_{2k}$. Since $[a_{1},c_{1}] = 1$, we obtain $d_{2k} = 1$. $d_{2k} = 1$ in turn implies $d_{1} = 1$ using the relations $d_2d_{3} \cdots d_{2k-1} = [c_{2k},d_{2k}]$ and $d_{1}d_{2}\cdots d_{2k} = 1$. Next, using $[b_1^{-1},d_1^{-1}]=a_1$, $[a_1^{-1},d_1]=b_1$ and $[d_1^{-1},b_2^{-1}]=c_1$, we obtain $a_{1} = b_{1} = c_{1} = 1$. Since $[c_2^{-1},b_1]=d_2$ and $[d_2^{-1},b_1^{-1}]=c_2$, we have $d_2 = c_2 = 1$, which in turn lead $a_2 = b_2 = 1$. Next, using the relations  $[a_{2}^{-1},c_i^{-1}]=d_{i}$ for any $3 \leq i \leq 2k$, we have $d_{3} = d_{4} = \cdots = d_{2k} = 1$. Since ${c_{2k-i+1}}^{-1}=c_{i}$ for any $i \leq k$ and $c_{1} = c_{2} = 1$, we conclude that $\pi_1(X(n,k,\overline{p},\overline{q}))$ is a free group of rank $s := k-2$ generated by $c_{3}$, $\cdots$, $c_{k}$.

To realize the finite free products of cyclic groups as the fundamental groups, we simply set $p_1 = p_2 = 1$, $p_3 = \cdots = p_l = 0$, and let $q_i \geq 1$ vary arbitrarily in the above presentation.

\section*{Acknowledgments} A. Akhmedov was partially supported by NSF grants FRG-0244663 and DMS-1005741. N. Saglam was partially supported by NSF grants FRG-0244663.

\end{document}